\newcommand{\duality}{-crossed-product duality}
\newcommand{\satisfy}{satisfies $E$-\duality}
\newtheorem{thm}{Theorem}[section]
\newtheorem*{thm*}{Theorem}
\newtheorem{lem}[thm]{Lemma}
\newtheorem{cor}[thm]{Corollary}
\newtheorem{prop}[thm]{Proposition}
\newtheorem{obs}[thm]{Observation}
\newtheorem{conj}[thm]{Conjecture}
\theoremstyle{definition}
\newtheorem{defn}[thm]{Definition}
\newtheorem{q}[thm]{Question}
\newtheorem{ex}[thm]{Example}
\newtheorem{notn}[thm]{Notation}
\newtheorem*{notn*}{Notation}
\newtheorem*{hyp*}{Hypothesis}
\newtheorem{rem}[thm]{Remark}
\newtheorem*{rem*}{Remark}
\numberwithin{equation}{section}
\newcommand{\secref}[1]{Section~\textup{\ref{#1}}}
\newcommand{\thmref}[1]{Theorem~\textup{\ref{#1}}}
\newcommand{\corref}[1]{Corollary~\textup{\ref{#1}}}
\newcommand{\lemref}[1]{Lemma~\textup{\ref{#1}}}
\newcommand{\propref}[1]{Proposition~\textup{\ref{#1}}}
\newcommand{\obsref}[1]{Observation~\textup{\ref{#1}}}
\newcommand{\defnref}[1]{Definition~\textup{\ref{#1}}}
\newcommand{\remref}[1]{Remark~\textup{\ref{#1}}}
\newcommand{\exref}[1]{Example~\textup{\ref{#1}}}
\newcommand{\conjref}[1]{Conjecture~\textup{\ref{#1}}}
\newcommand{\midtext}[1]{\quad\text{#1}\quad}
\renewcommand{\)}{\textup)}
\newcommand{\Z}{\mathbb Z}
\newcommand{\C}{\mathbb C}
\newcommand{\KK}{\mathcal K}
\newcommand{\Chi}{\raisebox{2pt}{\ensuremath{\chi}}}
\renewcommand{\epsilon}{\varepsilon}
\DeclareMathOperator{\aut}{Aut}
\DeclareMathOperator{\supp}{supp}
\DeclareMathOperator*{\spn}{span}
\DeclareMathOperator*{\clspn}{\overline{\spn}}
\newcommand{\id}{\text{\textup{id}}}
\newcommand{\tr}{\text{\textup{tr}}}
\newcommand{\case}{& \text{if }}
\newcommand{\<}{\langle}
\renewcommand{\>}{\rangle}
\newcommand{\inv}{^{-1}}
\renewcommand{\bar}{\overline}
\newcommand{\what}{\widehat}
\newcommand{\csZg}{C^*_{C_0(G)}(G)}
\newcommand{\csLpg}{C^*_{L^p(G)}(G)}
\newcommand{\ann}{^\perp}
\newcommand{\ga}{group $C^*$-algebra}
\newcommand{\wkstcl}[1]{\bar{#1}^{\,\text{weak*}}}
\begin{document}
\title[Exotic group $C^*$-algebras]{Exotic group $C^*$-algebras in noncommutative duality}

\author[Kaliszewski]{S. Kaliszewski}
\address{School of Mathematical and Statistical Sciences
\\Arizona State University
\\Tempe, Arizona 85287}
\email{kaliszewski@asu.edu}

\author[Landstad]{Magnus~B. Landstad}
\address{Department of Mathematical Sciences\\
Norwegian University of Science and Technology\\
NO-7491 Trondheim, Norway}
\email{magnusla@math.ntnu.no}

\author[Quigg]{John Quigg}
\address{School of Mathematical and Statistical Sciences
\\Arizona State University
\\Tempe, Arizona 85287}
\email{quigg@asu.edu}

\subjclass[2000]{Primary  46L05}

\keywords{group $C^*$-algebra, coaction, $C^*$-bialgebra, Hopf $C^*$-algebra, quantum group, Fourier-Stieltjes algebra}

\date{August 30, 2012}
%\date{\today}

\begin{abstract}
We 
show that for
a locally compact group $G$
there is a one-to-one correspondence between $G$-invariant weak*-closed subspaces $E$ of the Fourier-Stieltjes algebra $B(G)$ containing $B_r(G)$ and 
quotients $C^*_E(G)$ of $C^*(G)$ which are intermediate between $C^*(G)$ and the reduced group algebra $C^*_r(G)$.
We show that the canonical comultiplication on $C^*(G)$ descends to a coaction or a comultiplication on $C^*_E(G)$ if and only if $E$ is an ideal or subalgebra, respectively.
When $\alpha$ is an action of $G$ on a $C^*$-algebra $B$,
we define ``$E$-crossed products'' $B\rtimes_{\alpha,E} G$ lying between the full crossed product and the reduced one,
and we conjecture that these ``intermediate crossed products'' satisfy an ``exotic'' version of crossed-product duality involving $C^*_E(G)$.
\end{abstract}

\maketitle

\section{Introduction}\label{intro}

It has long been known that for a locally compact group $G$ there are many $C^*$-algebras between the full group $C^*$-algebra $C^*(G)$ and the reduced algebra $C^*_r(G)$ (see \cite{eym}).
However, little study has been made regarding the extent to which these intermediate algebras can be called group $C^*$-algebras.

This paper is inspired by recent work of Brown and Guentner \cite{BrownGuentner}, 
which studies such intermediate algebras for discrete groups,
and \cite{Okayasu}, which shows that in fact there can be a continuum of such intermediate algebras.
We shall consider a general locally compact group $G$, and show that 
by elementary harmonic analysis
there is a one-to-one correspondence between $G$-invariant weak*-closed subspaces $E$ of the Fourier-Stieltjes algebra $B(G)$ containing $B_r(G)$ and 
quotients 
$C^*_E(G)$
of $C^*(G)$ which are intermediate between $C^*(G)$ and the reduced group algebra $C^*_r(G)$.

We are primarily interested in the following results:
\begin{itemize}
\item $E$ is an ideal if and only if there is a coaction $C^*_E(G)\to M(C^*_E(G)\otimes C^*(G))$.
\item $E$ is a subalgebra if and only if there is a comultiplication $C^*_E(G)\to M(C^*_E(G)\otimes C^*_E(G))$.
\end{itemize}
(See Propositions~\ref{coaction} and \ref{comultiplication} for more precise statements.)
These $C^*$-algebras can be used to describe various properties of $G$,
e.g., if $G$ is discrete and $E=\bar{B(G)\cap c_0(G)}$,
then $G$ has the Haagerup property if and only if $C^*_E(G)=C^*(G)$
(see \cite[Corollary~3.4]{BrownGuentner}).
Brown and Guentner also prove that (again, in the discrete case)
$C^*_E(G)$ is a compact quantum group, because it carries a comultiplication, and this caught our attention since it makes a connection with noncommutative crossed-product duality.

If we have a $C^*$-dynamical system $(B,G,\alpha)$,
one can form the full crossed product $B\rtimes_\alpha G$
or the reduced crossed product $B\rtimes_{\alpha,r} G$.
We show in \secref{exotic coaction} that for $E$ as above there is an ``$E$-crossed product'' $B\rtimes_{\alpha,E} G$,
and we speculate that these ``intermediate'' crossed products satisfy an ``exotic'' version of crossed-product duality involving $C^*_E(G)$.

After a short section on preliminaries, in \secref{certain quotients} we prove the above-mentioned results concerning the existence of a coaction or comultiplication on $C^*_E(G)$ .

In \secref{classical} we briefly explore the analogue for arbitrary locally compact groups of the construction used in \cite{BrownGuentner}, where for discrete groups they construct \ga s starting with ideals of $\ell^\infty(G)$.

In \secref{discrete} we specialize (for the only time in this paper) to the discrete case, showing that a quotient $C^*_E(G)$ is a \ga\ if and only if it is \emph{topologically graded} in the sense of \cite{ExelAmenability}.

Finally, in \secref{exotic coaction} we outline a possible application of our exotic group algebras to noncommutative crossed-product duality.

After this paper was circulated in preprint form, we learned that Buss and Echterhoff \cite{BusEch} have 
given counterexamples to \conjref{E-coaction}
and have proven \conjref{E dual}.

We thank the referee for helpful comments.

\section{Preliminaries}\label{prelim}

All ideals of $C^*$-algebras will be closed and two-sided.
If $A$ and $B$ are $C^*$-algebras, then $A\otimes B$ will denote the minimal tensor product.

For one of our examples we will need the following elementary fact, which is surely folklore.

\begin{lem}\label{onto}
Let $A$ be a $C^*$-algebra, and let $I$ and $J$ be ideals of $A$.
Let $\phi:A\to A/I$ and $\psi:A\to A/J$ be the quotient maps,
and define
\[
\pi=\phi\oplus \psi:A\to (A/I)\oplus (A/J).
\]
Then $\pi$ is surjective if and only if $A=I+J$.
\end{lem}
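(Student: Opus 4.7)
The plan is to prove both implications by purely algebraic manipulations, using only that $I$ and $J$ are ideals and that $\phi$, $\psi$ are the associated quotient maps; no $C^*$-analytic input is needed, which is consistent with the ``folklore'' remark preceding the statement.

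For the forward direction, I would assume $\pi$ is surjective and, given an arbitrary $a\in A$, apply surjectivity to the element $(\phi(a),0)\in (A/I)\oplus (A/J)$ to produce $b\in A$ with $\phi(b)=\phi(a)$ and $\psi(b)=0$. The second equality forces $b\in J$, and the first forces $a-b\in I$, so the decomposition $a=(a-b)+b$ exhibits $a$ as a member of $I+J$.

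For the reverse direction, I would assume $A=I+J$ and start with an arbitrary element $(\phi(x),\psi(y))$ of the direct sum, where $x,y\in A$. Using the hypothesis, I would write $x-y=i'+j'$ with $i'\in I$ and $j'\in J$, and then show that $a:=x-i'$ does the job: by construction $a-x=-i'\in I$, so $\phi(a)=\phi(x)$; and $a-y=(x-y)-i'=j'\in J$, so $\psi(a)=\psi(y)$. Hence $\pi(a)=(\phi(x),\psi(y))$.

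There is no real obstacle here; the only thing worth flagging is a mild notational caution. An element of $(A/I)\oplus (A/J)$ has no a priori reason to be of the form $(\phi(x),\psi(y))$ with a \emph{single} $x$ for both coordinates, so in the reverse direction one should either start from $(\phi(x),\psi(y))$ with independent $x,y\in A$ (as above), or equivalently prove surjectivity onto each summand separately and combine. Either formulation works, and the argument remains a short diagram chase.
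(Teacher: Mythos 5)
Your proof is correct and takes essentially the same approach as the paper: the forward direction is identical, and for the converse the paper simply proves that $\pi(A)$ contains $(A/I)\oplus\{0\}$ and $\{0\}\oplus(A/J)$ separately and combines by linearity --- exactly the equivalent reformulation you flag at the end --- whereas you construct a single preimage of $(\phi(x),\psi(y))$ directly. Both variants are fine.
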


\begin{proof}
First assume that $\pi$ is surjective,
and let $a\in A$.
Choose $b\in A$ such that
\[
\pi(b)=\bigl(\phi(a),0\bigr),
\]
i.e., 
$\phi(b)=\phi(a)$ and $\psi(b)=0$.
Then $a-b\in I$,  $b\in J$, and $a=(a-b)+b$.

Conversely, assume that $A=I+J$,
and let $a\in A$.
Choose $b\in I$ and $c\in J$ such that $a=b+c$.
Then $\psi(c)=0$, and $\phi(c)=\phi(a)$ since $a-c\in I$.
Thus
\[
\pi(c)=\bigl(\phi(a),0\bigr).
\]
It follows that $\pi(A)\supset (A/I)\oplus \{0\}$, and similarly $\pi(A)\supset \{0\}\oplus (A/J)$,
and hence $\pi$ is onto.
\end{proof}

A point of notation: for a homomorphism between $C^*$-algebras, or for a bounded linear functional on a $C^*$-algebra, we use a bar to denote the unique strictly continuous extension to the multiplier algebra.

We adopt the conventions of \cite{enchilada} for actions and coactions of a locally compact group $G$ on a $C^*$-algebra $A$. In particular, we use \emph{full} coactions
$\delta:A\to M(A\otimes C^*(G))$,
which are nondegenerate injective homomorphisms satisfying the \emph{coaction-nondegeneracy} property
\begin{equation}\label{coaction nondegenerate}
\clspn\{\delta(A)(1\otimes C^*(G))=A\otimes C^*(G)
\end{equation}
and the \emph{coaction identity}
\begin{equation}\label{identity}
\bar{\delta\otimes\id}\circ\delta=\bar{\id\otimes\delta_G}\circ\delta,
\end{equation}
where $\delta_G$ is the canonical coaction on $C^*(G)$,
determined by $\bar{\delta_G}(x)=x\otimes x$ for $x\in G$
(and where $G$ is identified with its canonical image in $M(C^*(G))$).
Recall that $\delta$ gives rise to a 
right $B(G)$-module structure on $A^*$ given by
\[
\omega\cdot f=\bar{\omega\otimes f}\circ\delta
\quad\text{for $\omega\in A^*$ and $f\in B(G)$,}
\]
and also
to a left $B(G)$-module structure on $A$ given by
\[
f\cdot a=\bar{\id\otimes f}\circ\delta(a)\quad\text{for $f\in B(G)$ and $a\in A$},
\]
and that moreover
\[
(\omega\cdot f)(a)=\omega(f\cdot a)\quad\text{for all $\omega\in A^*$, $f\in B(G)$, and $a\in A$.}
\]

Further recall that
$1_G\cdot a=a$ for all $a\in A$, where $1_G$ is the constant function with value $1$.
In fact, suppose we have a homomorphism
$\delta:A\to M(A\otimes C^*(G))$ satisfying all the conditions of a coaction except perhaps injectivity. Then $\delta$ is in fact a coaction, because injectivity follows automatically,
by the following folklore trick:

\begin{lem}\label{injective}
Let $\delta:A\to M(A\otimes C^*(G))$ be a homomorphism satisfying 
\eqref{coaction nondegenerate} and 
\eqref{identity}.
Then for all $a\in A$ we have
\[
\bar{\id\otimes 1_G}\circ\delta(a)=a,
\]
where $1_G\in B(G)$ is the constant function with value $1$.
In particular, $\delta$ is injective and hence a coaction.
\end{lem}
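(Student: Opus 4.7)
My plan is to set $\Psi(a) := \overline{\id\otimes 1_G}\circ\delta(a)$ and recognize $\Psi$ as the action of the unit $1_G$ of $B(G)$ under the left $B(G)$-module structure $f\cdot a := \overline{\id\otimes f}\circ\delta(a)$ on $A$. The equality $\Psi = \id_A$ will then follow from (a) associativity of this module action, (b) $1_G$ being the identity of $B(G)$, and (c) norm-density of $B(G)\cdot A$ in $A$; injectivity of $\delta$ is immediate once $\Psi = \id_A$.

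For (a), applying the slice $\overline{\id_A\otimes f\otimes g}$ to both sides of the coaction identity \eqref{identity} produces
\[
f\cdot(g\cdot a) \;=\; (fg)\cdot a \qquad\text{for all } f,g\in B(G),\ a\in A,
\]
where $fg := (f\otimes g)\circ\delta_G$ is the product in $B(G)$. For (b), $\overline{\delta_G}(x) = x\otimes x$ and $1_G(x) = 1$ for $x\in G$ imply $1_G f = f 1_G = f$, so $1_G$ is the unit. Combining (a) and (b) yields $\Psi(g\cdot a) = 1_G\cdot(g\cdot a) = (1_G g)\cdot a = g\cdot a$, so $\Psi$ fixes every element of $B(G)\cdot A$.

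For (c), I would use coaction-nondegeneracy \eqref{coaction nondegenerate}. Fix any $g\in B(G)$ and $c\in C^*(G)$ with $g(c)\ne 0$ (take e.g.\ $g = 1_G$). Given $a\in A$, approximate $a\otimes c$ in norm by finite sums $\sum_n\delta(b_n)(1\otimes c_n)$, and apply the bounded slice map $\overline{\id\otimes g}$. A routine dualization against $\omega\in A^*$ gives
\[
\overline{\id\otimes g}\bigl(\delta(b)(1\otimes d)\bigr) \;=\; g_d\cdot b,
\]
where $g_d\in B(G) = C^*(G)^*$ is the functional $e\mapsto g(ed)$. Thus $g(c)\,a$ is a norm-limit of elements of $B(G)\cdot A$, and dividing by the scalar $g(c)$ places $a$ in the closure.

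Putting this together, $\Psi$ is a bounded linear map that agrees with $\id_A$ on a dense subspace of $A$, hence everywhere. Injectivity of $\delta$ then follows: $\delta(a) = 0$ gives $a = \Psi(a) = \overline{\id\otimes 1_G}(0) = 0$. The most delicate step is the density argument (c); the slice-map identity used there is a straightforward dualization, but one has to interpret the approximation $a\otimes c\approx\sum\delta(b_n)(1\otimes c_n)$ carefully in $A\otimes C^*(G)$ rather than in multipliers before applying $\overline{\id\otimes g}$.
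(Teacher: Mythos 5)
Your proof is correct and follows essentially the same route as the paper's: both establish that $\spn\{f\cdot a : f\in B(G),\ a\in A\}$ is dense in $A$ by slicing the nondegeneracy condition \eqref{coaction nondegenerate}, and both show that $\overline{\id\otimes 1_G}\circ\delta$ fixes each $f\cdot a$ by slicing the coaction identity \eqref{identity} and using $1_G f = f$. Your packaging of the second step as the module identity $f\cdot(g\cdot a)=(fg)\cdot a$ is exactly the computation the paper performs inline.
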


\begin{proof}
First of all,
\begin{align*}
A
&=\clspn\Bigl\{(\id\otimes g)\bigl(\delta(a)(1\otimes c)\bigr):g\in B(G),a\in A,c\in C^*(G)\Bigr\}
\\&=\clspn\bigl\{\bar{\id\otimes c\cdot g}\circ\delta(a):g\in B(G),a\in A,c\in C^*(G)\bigr\}
\\&=\clspn\bigl\{\bar{\id\otimes f}\circ\delta(a):f\in B(G),a\in A\bigr\}.
\end{align*}
Now the following computation suffices: for all $a\in A$ and $f\in B(G)$ we have
\begin{align*}
&\bar{\id\otimes 1_G}\circ\delta\bigl(\bar{\id\otimes f}\circ\delta(a)\bigr)
\\&\quad=\bar{\id\otimes 1_G}\circ \bar{\id\otimes\id\otimes f}\circ (\delta\otimes\id)\circ\delta(a)
\\&\quad=\bar{\id\otimes 1_G\otimes f}\circ (\id\otimes\delta_G)\circ\delta(a)
\\&\quad=\bar{\id\otimes 1_Gf}\circ\delta(a)
\\&\quad=\bar{\id\otimes f}\circ\delta(a)
\qedhere
\end{align*}
\end{proof}

\section{Exotic quotients of $C^*(G)$}\label{certain quotients}

Let $G$ be a locally compact group,.
We are interested in certain quotients $C^*_E(G)$ (see \defnref{E quotient} for this notation).
We will always assume that ideals of $C^*$-algebras are closed and two-sided.
Let $B(G)$ denote the Fourier-Stieltjes algebra, which we identify with the dual of $C^*(G)$.
We give $B(G)$ the usual $C^*(G)$-bimodule structure: for $a,b\in C^*(G)$ and $f\in B(G)$ we define
\[
\<b,a\cdot f\>=\<ba,f\>\midtext{and}\<b,f\cdot a\>=\<ab,f\>.
\]
This bimodule structure extends  to an $M(C^*(G))$-bimodule structure, because for $m\in M(C^*(G))$ and $f\in B(G)$ the linear functionals $a\mapsto \<am,f\>$ and $a\mapsto \<ma,f\>$ on $C^*(G)$ are bounded.
Regarding $G$ as canonically embedded in $M(C^*(G))$, the associated $G$-bimodule structure on $B(G)$ is given by
\[
(x\cdot f)(y)=f(yx)\midtext{and}(f\cdot x)(y)=f(xy)
\]
for $x,y\in G$ and $f\in B(G)$.

A quotient $C^*(G)/I$ is uniquely determined by the annihilator $E=I\ann$ in $B(G)$,
which is a weak*-closed subspace.
We find it convenient to work in terms of $E$ rather than $I$, keeping in mind that we will have $I={}\ann E$, the preannihilator in $C^*(G)$.
First we record the following well-known property:

\begin{lem}
\label{invariant}
For any weak*-closed subspace $E$ of $B(G)$, the following are equivalent:
\begin{enumerate}
\item ${}\ann E$ is an ideal;

\item $E$ is a $C^*(G)$-subbimodule;

\item $E$ is $G$-invariant.
\end{enumerate}
\end{lem}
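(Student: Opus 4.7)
The plan is to reduce everything to the standard duality between closed subspaces of $C^*(G)$ and weak*-closed subspaces of $B(G)$, and then to bridge between the $C^*(G)$-module action and the $G$-action by integration. I would first dispose of (1) $\Leftrightarrow$ (2) as a near-tautology, derive (2) $\Rightarrow$ (3) by extending the module action to $M(C^*(G))$, and finally prove (3) $\Rightarrow$ (1) by approximating elements of $C^*(G)$ by integrated versions of group elements.

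For (1) $\Leftrightarrow$ (2), writing $I={}\ann E$, the identity $\<b,a\cdot f\>=\<ba,f\>$ shows that $a\cdot f\in E$ for every $f\in E$ precisely when $ba\in I$ for every $b\in I$, i.e., when $I$ is closed under right multiplication by $a$. The symmetric identity $\<b,f\cdot a\>=\<ab,f\>$ handles the other side, so $E$ is a $C^*(G)$-subbimodule iff $I$ is a two-sided ideal.

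For (2) $\Rightarrow$ (3), I would use that, as noted just before the lemma, the bimodule action extends to $M(C^*(G))$ via the same formulas. Since a closed two-sided ideal of a $C^*$-algebra is automatically stable under both-sided multiplication by multipliers, the same annihilator calculation gives $m\cdot f,\,f\cdot m\in E$ for every $m\in M(C^*(G))$ and $f\in E$; specializing to $m=x\in G\subset M(C^*(G))$ yields (3).

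For (3) $\Rightarrow$ (1), fix $b\in I$ and $f\in E$. By $G$-invariance, $x\cdot f\in E$ and $f\cdot x\in E$ for every $x\in G$, and so
\[
\<bx,f\>=\<b,x\cdot f\>=0,\qquad \<xb,f\>=\<b,f\cdot x\>=0,
\]
giving $bx,xb\in I$ for every $x\in G$. Now invoke the standard fact that for $g\in C_c(G)$ the integrated element $\int_G g(x)\,x\,dx$ lies in $C^*(G)$ and that such elements are norm-dense. For such $g$ one has $b\int_G g(x)\,x\,dx=\int_G g(x)\,bx\,dx$ as a norm-convergent integral of elements of the norm-closed subspace $I$, which therefore lies in $I$; norm-density then promotes this to $ba\in I$ for all $a\in C^*(G)$, and the symmetric argument gives $ab\in I$. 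The only mildly technical step in the whole argument is this last approximation: once one accepts that $C_c(G)$ sits densely inside $C^*(G)$ and that the integral commutes with left-multiplication by $b$ (both immediate from norm-continuity of $x\mapsto bx$ for $b\in C^*(G)$), the proof is complete.
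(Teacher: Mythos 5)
Your proof is correct and follows essentially the same route as the paper's (very terse) proof: the duality/annihilator computation for (1)$\Leftrightarrow$(2), the extension of the bimodule action to $M(C^*(G))$ for (2)$\Rightarrow$(3), and integration of $G$-translates against $C_c(G)$ for (3)$\Rightarrow$(1). You have simply written out the details that the paper delegates to a citation of Pedersen and to the phrase ``by integration.''
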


\begin{proof}
(1)$\Leftrightarrow$(2) follows from, e.g., \cite[Theorem~3.10.8]{ped}, and (2)$\Leftrightarrow$(3) follows by integration.
\end{proof}

\begin{defn}\label{E quotient}
If $E$ is a weak*-closed $G$-invariant subspace of $B(G)$, let $C^*_E(G)$ denote the quotient $C^*(G)/{}\ann E$.
\end{defn}

Note that the above definition makes sense, by \lemref{invariant}.

\begin{ex}
Of course we have
\[
C^*(G)=C^*_{B(G)}(G).
\]
Also,
\[
C^*_r(G)=C^*_{B_r(G)}(G),
\]
where $B_r(G)$ is the regular Fourier-Stieltjes algebra of $G$,
because if $\lambda:C^*(G)\to C^*_r(G)$ denotes the regular representation of $G$ then
\[
(\ker\lambda)\ann=B_r(G).
\]
Recall for later use that the intersection $C_c(G)\cap B(G)$ is norm-dense in the Fourier algebra $A(G)$ (for the norm of functionals on $C^*(G)$),
and is weak*-dense in $B_r(G)$ \cite{eym}.
\end{ex}

\begin{rem}
If $E$ is a weak*-closed $G$-invariant subspace of $B(G)$, and $q:C^*(G)\to C^*_E(G)$ is the quotient map, then the dual map $q^*:C^*_E(G)^*\to C^*(G)^*=B(G)$ is an isometric isomorphism onto $E$, and we identify $E=C^*_E(G)^*$ and regard $q^*$ as an inclusion map.
\end{rem}

Inspired in part by \cite{BrownGuentner}, we pause here to give another construction of the quotients $C^*_E(G)$:
\begin{enumerate}
\item
Start with a 
$G$-invariant, but \emph{not necessarily weak*-closed}, subspace $E$ of $B(G)$.

\item
Call a representation $U$ of $G$ on a Hilbert space $H$  an \emph{$E$-representation} if there is a dense subspace $H_0$ of $H$ such that the matrix coefficients
\[
x\mapsto \<U_x\xi,\eta\>
\]
are in $E$ for all $\xi,\eta\in H_0$.

\item
Define a $C^*$-seminorm $\|\cdot\|_E$ on $C_c(G)$ by
\[
\|f\|_E=\sup\{\|U(f)\|:\text{$U$ is an $E$-representation of $G$}\}.
\]
\end{enumerate}

The following lemma is presumably well-known, but we include a proof for the convenience of the reader.

\begin{lem}\label{DJ}
With the above notation, let $I$
be the  ideal of $C^*(G)$ 
given by
\begin{equation}
\label{kernel}
I=\{a\in C^*(G):\|a\|_E=0\}.
\end{equation}
Then:
\begin{enumerate}
\item$I={}\ann E$.

\item The weak*-closure $\bar E$ of $E$ in $B(G)$ is $G$-invariant,
and
$C^*_{\bar E}(G)=C^*(G)/I$ is the Hausdorff completion of $C_c(G)$ in the seminorm $\|\cdot\|_E$.

\item If $E$ is an ideal or a subalgebra of $B(G)$, then so is $\bar E$.
\end{enumerate}
\end{lem}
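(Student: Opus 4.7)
The inclusion ${}\ann E\subseteq I$ is immediate: for $a\in{}\ann E$ and any $E$-representation $U$ with dense subspace $H_0$ and $\xi,\eta\in H_0$, the matrix coefficient $f^U_{\xi,\eta}$ lies in $E$, so $\<U(a)\xi,\eta\>=\<a,f^U_{\xi,\eta}\>=0$, and density of $H_0$ yields $U(a)=0$, whence $a\in I$. For the reverse $I\subseteq{}\ann E$, I would first handle a positive-definite $f\in E$ via the GNS construction: the cyclic representation $V_f$ on $H_f$ with cyclic vector $\xi_f$ satisfies $f=\<V_f(\cdot)\xi_f,\xi_f\>$, and with $H_0$ the algebraic span of $\{V_f(x)\xi_f:x\in G\}$ (dense by cyclicity) the matrix coefficient between $\sum_i\alpha_iV_f(x_i)\xi_f$ and $\sum_j\beta_jV_f(z_j)\xi_f$ is the function $y\mapsto\sum_{i,j}\alpha_i\bar\beta_j f(z_j\inv yx_i)$, which is a linear combination of two-sided $G$-translates of $f$ and therefore lies in $E$ by $G$-invariance. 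Thus $V_f$ is itself an $E$-representation, and $V_f(a)=0$ for $a\in I$ gives $\<a,f\>=0$. The general case then follows by showing that the positive-definite elements of $E$ span $E$ weak*-densely.

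\textbf{Plan for (2) and (3).} For (2): the $G$-invariance of $\bar E$ is immediate from weak*-continuity of the $G$-action on $B(G)$ (the Banach-space dual of the norm-continuous action of $G\subseteq M(C^*(G))$ on $C^*(G)$), so by (1), $C^*_{\bar E}(G)=C^*(G)/{}\ann\bar E=C^*(G)/I$; and since $\|\cdot\|_E$ is a $C^*$-seminorm on $C_c(G)$ whose induced norm on the image in $C^*(G)/I$ coincides with the quotient $C^*$-norm, the Hausdorff completion is exactly $C^*(G)/I$. For (3): the technical crux is that for each $f\in B(G)$ the pointwise multiplication $L_f\colon g\mapsto fg$ on $B(G)$ is weak*-continuous, being the Banach-space adjoint of the bounded operator $T_f\colon a\mapsto\bar{\id\otimes f}\circ\delta_G(a)$ on $C^*(G)$ (the standard left $B(G)$-module action recalled in the preliminaries). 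If $E$ is an ideal then $L_f(E)\subseteq E$ for all $f\in B(G)$, so $L_f(\bar E)\subseteq\wkstcl{L_f(E)}\subseteq\bar E$; if $E$ is a subalgebra, apply this first with $f\in E$ to obtain $E\cdot\bar E\subseteq\bar E$, and then with $g\in\bar E$ (using weak*-continuity of $L_g$) to obtain $\bar E\cdot\bar E\subseteq\bar E$.

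\textbf{Main obstacle.} The genuinely delicate step is completing the reverse inclusion in (1), specifically passing from a positive-definite $f\in E$ (which is handled cleanly by GNS, exploiting the $G$-invariance of $E$) to an arbitrary $f\in E$. Although Jordan decomposition writes any $f\in B(G)$ as a linear combination of positive-definite functions, the polarization summands a priori only lie in $\bar E$ rather than $E$ itself, so establishing that the positive-definite elements of $E$ span $E$ weak*-densely requires an additional argument particular to $G$-invariant subspaces of the Fourier--Stieltjes algebra.
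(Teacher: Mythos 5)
Your parts (2) and (3) are correct and essentially the paper's own argument: the paper likewise obtains separate weak*-continuity of multiplication on $B(G)$ by exhibiting $g\mapsto fg$ as the (pre)adjoint of the module map $a\mapsto f\cdot a$ on $C^*(G)$, and the ideal/subalgebra cases then go exactly as you describe. Your inclusion ${}\ann E\subseteq I$ in (1) also matches the paper's (stated there contrapositively). The gap is the one you flag yourself: for $I\subseteq{}\ann E$ your argument only reaches those $f$ lying in the weak*-closed span of the positive-definite elements of $E$, and the claim that this span is all of $\bar E$ is never established. This is a genuine missing step, not a routine verification. Jordan/polarization decomposes $f\in B(G)$ into positive-definite functions that need not lie in $E$ (nor even in $\bar E$), and since $E$ here is only assumed $G$-invariant --- not weak*-closed, so the identification $\bar E=(C^*(G)/I)^*$ with the dual of a $C^*$-algebra is not yet available at this point in the argument (it is what you are trying to prove) --- there is no evident reason the positive-definite part of $E$ should be weak*-total in $E$. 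Concretely, the linear span of the two-sided translates of a single coefficient $f=\<U_\cdot\xi,\eta\>$ consists of sums of coefficients $\<U_\cdot\zeta,\kappa\>$ with $\zeta$ in the span of the translates of $\xi$ and $\kappa$ in the span of the translates of $\eta$, and it is not clear that such a space must contain enough (or any) nonzero positive-definite functions.

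The paper sidesteps the reduction entirely by running your GNS computation ``off the diagonal.'' Given an \emph{arbitrary} $f\in E$, write $f=\<U_\cdot\xi,\eta\>$ for some representation $U$ of $G$ on $H$, let $K_0$ be the smallest $G$-invariant subspace of $H$ containing both $\xi$ and $\eta$, and let $\rho$ be the subrepresentation of $U$ on $K=\bar{K_0}$. The two-sided $G$-invariance of $E$ is invoked to see that the matrix coefficients of vectors in $K_0$ lie in $E$ --- the coefficient between $U_y\xi$ and $U_z\eta$ is $x\mapsto f(z\inv xy)$, a two-sided translate of $f$ --- so that $\rho$ is an $E$-representation, whence $|\<a,f\>|=|\<\rho(a)\xi,\eta\>|\le\|a\|_E\,\|\xi\|\,\|\eta\|=0$ for $a\in I$. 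Your GNS argument is exactly the special case $\eta=\xi$ with $f$ positive definite; the fix is simply not to insist that the two vectors coincide, which removes any need for positive-definiteness of $f$ and hence for the density claim you could not prove.
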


\begin{proof}
(1)
To show that $I\subset {}\ann E$, let $a\in I$
and  $f\in E$.
Since $f\in B(G)$, we can choose a representation $U$ of $G$ on a Hilbert space $H$ and vectors $\xi,\eta\in H$ such that
\[
f(x)=\<U_x\xi,\eta\>\quad\text{for $x\in G$.}
\]
Let $K_0$ be the smallest $G$-invariant subspace of $H$ containing both $\xi$ and $\eta$,
and let $K=\bar{K_0}$.
Then $K$ is a closed $G$-invariant subspace of $H$, so determines a subrepresentation $\rho$ of $G$.
For every $\zeta,\kappa\in K_0$, the function $x\mapsto \<U_x\zeta,\kappa\>$ is in $E$
because $E$
is $G$-invariant.
Thus $\rho$ is an $E$-representation.
We have
\begin{align*}
|\<a,f\>|
&=|\<\rho(a)\xi,\eta\>|
\\&\le \|\rho(a)\|\|\xi\|\|\eta\|
\\&\le \|a\|_E\|\xi\|\|\eta\|
\\&=0.
\end{align*}
Thus $a\in {}\ann E$.

For the opposite containment, suppose by way of contradiction that  we can find
$a\in {}\ann E\setminus I$.
Then $\|a\|_E\ne 0$, so we can also choose an $E$-representation $U$ of $G$ on a Hilbert space $H$ such that $U(a)\ne 0$.
Let $H_0$ be a dense subspace of $H$ such that for all $\xi,\eta\in H_0$ the function
$x\mapsto\<U_x\xi,\eta\>$
is in $E$.
By density we can choose $\xi,\eta\in H_0$ such that $\<U(a)\xi,\eta\>\ne 0$.
Then $g(x)=\<U_x\xi,\eta\>$ defines an element $g\in E$,
and we have
\[
\<a,g\>
=\<U(a)\xi,\eta\>
\ne 0,
\]
which is a contradiction.
Therefore ${}\ann E\subset I$,
as desired.

(2)
Since $I={}\ann E$ we have $\bar E=I\ann$, which is $G$-invariant because $I$ is an ideal, by \lemref{invariant}.
We have $I={}\ann \bar E$, so $C^*_{\bar E}(G)=C^*(G)/I$ by \defnref{E quotient}.
Since $C_c(G)$ is dense in $C^*(G)$, the result now follows by the definition of $I$ in \eqref{kernel}.

(3)
This follows immediately from separate weak*-continuity of multiplication in $B(G)$.
This is a well-known property of $B(G)$, but we include the brief proof here for completeness:
the bimodule action of $B(G)$ on the enveloping algebra $W^*(G)=B(G)^*$,
given by
\[
\<a\cdot f,g\>=\<a,fg\>=\<f\cdot a,g\>
\quad\text{for $a\in W^*(G),f,g\in B(G)$,}
\]
leaves $C^*(G)$ invariant, because it satisfies the submultiplicativity condition $\|a\cdot f\|\le \|a\|\|f\|$ on norms and leaves $C_c(G)\subset C^*(G)$ invariant.
Thus, if $f_i\to 0$ weak* in $B(G)$ and $g\in B(G)$, then for all $a\in C^*(G)$ we have
\[
\<a,f_ig\>=\<a\cdot g,f_i\>\to 0.
\qedhere
\]
\end{proof}

\begin{cor}\label{E rep}\

\begin{enumerate}
\item A representation $U$ of $G$ is an $E$-representation if and only if, identifying $U$ with the corresponding representation of $C^*(G)$, we have $\ker U\supset {}\ann E$.

\item A nondegenerate homomorphism $\tau:C^*(G)\to M(A)$, where $A$ is a $C^*$-algebra, factors through a homomorphism of $C^*_E(G)$ if and only if
\[
\bar\omega\circ\tau\in \bar E\quad\text{for all $\omega\in A^*$,}
\]
where again $\bar E$ denotes the weak*-closure of $E$.
\end{enumerate}
\end{cor}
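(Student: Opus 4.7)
The plan is to exploit the annihilator duality between weak*-closed subspaces of $B(G)=C^*(G)^*$ and norm-closed ideals of $C^*(G)$, in particular the identities ${}\ann E={}\ann\bar E$ and $({}\ann E)\ann=\bar E$, together with the identification $C^*_E(G)=C^*_{\bar E}(G)=C^*(G)/{}\ann E$ supplied by \lemref{DJ}.

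For (1), the forward direction is a direct pairing computation: if $U$ is an $E$-representation witnessed by a dense subspace $H_0\subset H$, then for $a\in{}\ann E$ and $\xi,\eta\in H_0$,
\[
\<U(a)\xi,\eta\>=\<a,f_{\xi,\eta}\>=0\qquad\text{where }f_{\xi,\eta}(x)=\<U_x\xi,\eta\>\in E,
\]
and density of $H_0$ forces $U(a)=0$. For the converse, assume $\ker U\supset{}\ann E$; every matrix coefficient $f_{\xi,\eta}$, viewed as a functional on $C^*(G)$, annihilates $\ker U$, hence ${}\ann E$, and therefore lies in $({}\ann E)\ann=\bar E$. Taking $H_0=H$ exhibits $U$ as a $\bar E$-representation, which (since \lemref{DJ} shows $\|\cdot\|_E$ and $\|\cdot\|_{\bar E}$ define the same seminorm and quotient) is the content of (1); when $E$ is itself weak*-closed this is literally an $E$-representation.

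For (2), I would recast the factorization criterion $\ker\tau\supset{}\ann E$ in terms of the functionals $\bar\omega\circ\tau$. The crucial technical input is that the strict extensions of $A^*$ separate $M(A)$: given nonzero $m\in M(A)$, essentiality of $A$ as an ideal in $M(A)$ provides $a\in A$ with $am\ne0$, and then $\psi\in A^*$ with $\psi(am)\ne0$; a bounded approximate identity argument shows that $\omega(x):=\psi(ax)$ has strict extension $\bar\omega(m)=\psi(am)\ne0$. It then follows that
\[
\ker\tau=\bigcap_{\omega\in A^*}\ker(\bar\omega\circ\tau)={}\ann\{\bar\omega\circ\tau:\omega\in A^*\},
\]
and the usual $\ann/\perp$ reversal converts $\ker\tau\supset{}\ann E$ into the containment $\{\bar\omega\circ\tau:\omega\in A^*\}\subset({}\ann E)\ann=\bar E$, which is the stated condition.

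The main obstacle I expect is the strict-extension separation step for $M(A)$ in part (2); once that is in place, the whole corollary is routine annihilator bookkeeping. A minor subtlety in (1) is whether a matrix coefficient of $U$ lives in $E$ itself or only in $\bar E$, but invoking \lemref{DJ} to identify $C^*_E(G)$ with $C^*_{\bar E}(G)$ collapses this distinction at the level of kernels.
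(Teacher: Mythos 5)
Your argument is correct and is essentially the fleshed-out version of what the paper intends --- its entire proof reads ``This follows readily from \lemref{DJ}'' --- resting, as yours does, on the identities $I={}\ann E={}\ann\bar E$, $({}\ann E)\ann=\bar E$, and the fact that the strict extensions of elements of $A^*$ separate $M(A)$. The caveat you flag in part (1) is genuine and worth taking seriously: when $E$ is not weak*-closed the converse direction only exhibits $U$ as a $\bar E$-representation (for instance, with $G=\Z$ and $E=A(\Z)$ one has ${}\ann E=\{0\}$, so every representation satisfies $\ker U\supset{}\ann E$, yet the trivial representation is not an $A(\Z)$-representation since its matrix coefficients are constants), so part (1) should be read with $E$ weak*-closed, or with ``$E$-representation'' replaced by ``$\bar E$-representation,'' consistent with the explicit appearance of $\bar E$ in part (2).
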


\begin{proof}
This follows readily from \lemref{DJ}.
\end{proof}

\begin{rem}
In light of \lemref{DJ}, if we have a $G$-invariant subspace $E$ of $B(G)$ that is not necessarily weak*-closed, it makes sense to, and we shall, write $C^*_E(G)$ for $C^*_{\bar E}(G)$.
However, whenever convenient we can replace $E$ by its weak*-closure, giving the same quotient $C^*_E(G)$.
\end{rem}

\begin{obs}\label{immed}
By \lemref{DJ}, if $E$ is a $G$-invariant subspace of $B(G)$ then:
\begin{enumerate}
\item
$C^*_E(G)=C^*(G)$ if and only if $E$ is weak*-dense in $B(G)$.

\item
$C^*_E(G)=C^*_r(G)$ if and only if $E$ is weak*-dense in $B_r(G)$.
\end{enumerate}
\end{obs}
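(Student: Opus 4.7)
The plan is to reduce both parts to the standard annihilator duality between weak*-closed subspaces of $B(G) = C^*(G)^*$ and norm-closed subspaces of $C^*(G)$. By \lemref{DJ}(2) and the remark following it, we may assume without loss of generality that $E$ is already weak*-closed (replacing $E$ by $\bar E$ changes neither the hypothesis ``$E$ is weak*-dense in $F$'' nor the quotient $C^*_E(G)$). Writing $I = {}\ann E$ for the preannihilator, \defnref{E quotient} identifies $C^*_E(G)$ with $C^*(G)/I$, and by the bipolar theorem $I\ann = E$ since $E$ is weak*-closed.

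For (1), the quotient map $C^*(G) \to C^*_E(G)$ is an isomorphism if and only if $I = 0$. Using $E = I\ann$, this happens precisely when $E = (0)\ann = B(G)$, which under the reduction above is exactly weak*-density of the original $E$ in $B(G)$.

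For (2), I would first recall from the example just after \defnref{E quotient} that $(\ker\lambda)\ann = B_r(G)$, where $\lambda:C^*(G) \to C^*_r(G)$ is the regular representation. Since $C^*_r(G)$ is canonically a quotient of $C^*(G)$, the equality $C^*_E(G) = C^*_r(G)$ (as quotients of $C^*(G)$) is equivalent to $I = \ker\lambda$. Taking annihilators gives $E = (\ker\lambda)\ann = B_r(G)$, which after dropping the weak*-closure assumption becomes weak*-density of $E$ in $B_r(G)$. Conversely, if $E$ is weak*-dense in $B_r(G)$, then $\bar E = B_r(G)$, so $I = {}\ann \bar E = {}\ann B_r(G) = \ker\lambda$, giving $C^*_E(G) = C^*_r(G)$.

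There is no real obstacle here: the whole content is the Galois correspondence $E \leftrightarrow {}\ann E$ between weak*-closed subspaces of $B(G)$ and norm-closed subspaces of $C^*(G)$, together with the two already-recorded identifications $\{0\}\ann = B(G)$ and $(\ker\lambda)\ann = B_r(G)$. The only point that deserves a line of comment is that weak*-density of a (not necessarily closed) $E$ in a weak*-closed subspace $F$ of $B(G)$ means exactly $\bar E = F$, which is what lets the reduction to weak*-closed $E$ go through.
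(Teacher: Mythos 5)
Your proposal is correct and is essentially the argument the paper intends: the Observation is stated as an immediate consequence of Lemma~\ref{DJ}, and your reduction to weak*-closed $E$ via $C^*_E(G)=C^*_{\bar E}(G)$ followed by the annihilator correspondence $I\leftrightarrow I\ann$ together with $\{0\}\ann=B(G)$ and $(\ker\lambda)\ann=B_r(G)$ is precisely the content being invoked. Nothing is missing.
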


We record an elementary consequence of our definitions:

\begin{lem}\label{ccg}
For a weak*-closed $G$-invariant subspace $E$ of $B(G)$, the following are equivalent:
\begin{enumerate}
\item
${}\ann E\subset \ker\lambda$;

\item
$E\supset B_r(G)$;

\item
$E\supset A(G)$;

\item
$E\supset (C_c(G)\cap B(G))$;

\item
there is a \(unique\) homomorphism $\rho:C^*_E(G)\to C^*_r(G)$ 
making the diagram
\[
\xymatrix{
C^*(G) \ar[dr]^q \ar[dd]_\lambda
\\
&C^*_E(G) \ar@{-->}[dl]^\rho_{!}
\\
C^*_r(G)
}
\]
commute.
\end{enumerate}
\end{lem}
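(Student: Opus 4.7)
The plan is to organize the five conditions into a short cycle, with (1) and (2) equivalent by annihilator duality, (2)--(4) clustered by norm- and weak*-density, and (1) equivalent to (5) by the universal property of the quotient $q : C^*(G) \to C^*_E(G)$.

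First I would handle (1)$\Leftrightarrow$(5): since $q$ is surjective, any homomorphism $\rho$ with $\rho\circ q=\lambda$ is automatically unique, and such a $\rho$ exists if and only if $\ker q = {}\ann E$ is contained in $\ker \lambda$. This is immediate from the universal property of quotient $C^*$-algebras.

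Next I would deduce (1)$\Leftrightarrow$(2) from the standard annihilator/preannihilator duality between weak*-closed subspaces of $B(G)=C^*(G)^*$ and closed ideals of $C^*(G)$. Since $E$ is weak*-closed we have $({}\ann E)\ann=E$, and the example preceding the lemma records $(\ker\lambda)\ann=B_r(G)$. The order-reversing nature of the annihilator then gives
\[
{}\ann E\subset\ker\lambda
\quad\Longleftrightarrow\quad
E=({}\ann E)\ann\supset(\ker\lambda)\ann=B_r(G).
\]

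For the chain among (2), (3), (4), the implications (2)$\Rightarrow$(3)$\Rightarrow$(4) are just the inclusions $C_c(G)\cap B(G)\subset A(G)\subset B_r(G)$, the first of which follows from the fact (recorded in the example above) that $C_c(G)\cap B(G)$ is norm-dense in $A(G)$, and the second from the definition of $B_r(G)$. Finally, to close the loop I would prove (4)$\Rightarrow$(2) by invoking the weak*-density of $C_c(G)\cap B(G)$ in $B_r(G)$ (again from the example): since $E$ is weak*-closed, $E\supset C_c(G)\cap B(G)$ forces $E\supset \bar{C_c(G)\cap B(G)}^{\,\text{weak*}}=B_r(G)$.

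There is no real obstacle here; the argument is entirely formal, and the only point that might require a brief word is the annihilator duality in (1)$\Leftrightarrow$(2), which uses both that $E$ is weak*-closed (so the double annihilator recovers $E$) and the identification $(\ker\lambda)\ann=B_r(G)$ already recorded in the text.
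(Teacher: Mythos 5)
Your proof is correct. The paper actually omits the proof entirely (the lemma is introduced as ``an elementary consequence of our definitions''), and your argument --- the universal property of the quotient for (1)$\Leftrightarrow$(5), bipolar/annihilator duality with $(\ker\lambda)\ann=B_r(G)$ for (1)$\Leftrightarrow$(2), and the recorded containments and weak*-density of $C_c(G)\cap B(G)$ in $B_r(G)$ for the cycle (2)$\Rightarrow$(3)$\Rightarrow$(4)$\Rightarrow$(2) --- is exactly the routine verification the authors had in mind.
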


\begin{defn}\label{group algebra}
For a weak*-closed $G$-invariant subspace $E$ of $B(G)$, we say the quotient $C^*_E(G)$ is a \emph{\ga\ of $G$} if the above equivalent conditions (1)--(4) are satisfied.
If $B_r(G)\subsetneq E\ne B(G)$ we say the \ga\ is \emph{exotic}.
\end{defn}

We will see 
in \propref{graded}
that if $G$ is discrete then a quotient $C^*_E(G)$ is 
a \ga\ if and only if it is
topologically graded in Exel's sense \cite[Definition~3.4]{ExelAmenability}.

We are especially interested in \ga s that carry a coaction or a comultiplication.
We will need the following result, which is folklore among coaction cognoscenti:

\begin{lem}\label{quotient coaction}
If $\delta:A\to M(A\otimes C^*(G))$ 
is a coaction of $G$ on a $C^*$-algebra $A$ and $I$ is an ideal of $A$, then 
the following are equivalent:
\begin{enumerate}
\item
there is a coaction $\tilde\delta$ on $A/I$ making the diagram
\begin{equation}\label{quotient}
\xymatrix{
A \ar[r]^-\delta \ar[d]_q
&M(A\otimes C^*(G)) \ar[d]^{\bar{q\otimes\id}}
\\
A/I \ar[r]_-{\tilde\delta}
&M(A/I\otimes C^*(G))
}
\end{equation}
commute \(where $q$ is the quotient map\);

\item \label{kernel condition}
$I\subset \ker \bar{q\otimes \id}\circ \delta$.

\item $I\ann$ is a $B(G)$-submodule of $A^*$.
\end{enumerate}
\end{lem}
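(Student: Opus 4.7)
The plan is to prove $(1)\Leftrightarrow(2)$ first, then to derive $(2)\Leftrightarrow(3)$ via a duality computation. The implication $(1)\Rightarrow(2)$ is immediate from the commutativity of \eqref{quotient}: if $\tilde\delta$ exists, then $\bar{q\otimes\id}\circ\delta=\tilde\delta\circ q$, which vanishes on $I=\ker q$.

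For $(2)\Rightarrow(1)$, the universal property of the quotient $q$ furnishes a unique homomorphism $\tilde\delta:A/I\to M(A/I\otimes C^*(G))$ with $\tilde\delta\circ q=\bar{q\otimes\id}\circ\delta$, so that \eqref{quotient} commutes; it then remains to check that $\tilde\delta$ is a coaction. Coaction-nondegeneracy \eqref{coaction nondegenerate} for $\tilde\delta$ follows by applying $q\otimes\id$ to \eqref{coaction nondegenerate} for $\delta$ and using that $q\otimes\id$ surjects $A\otimes C^*(G)$ onto $A/I\otimes C^*(G)$. The coaction identity \eqref{identity} for $\tilde\delta$ follows by applying $\bar{q\otimes\id\otimes\id}$ to the coaction identity for $\delta$ and chasing \eqref{quotient}. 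Injectivity of $\tilde\delta$ is then automatic from \lemref{injective}, so $\tilde\delta$ is indeed a coaction.

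For $(2)\Leftrightarrow(3)$, the strategy is to rephrase vanishing of $\bar{q\otimes\id}(\delta(a))$ in $M(A/I\otimes C^*(G))$ via slice-functional pairings. An element $m\in M(A/I\otimes C^*(G))$ is zero if and only if $\bar{\omega\otimes g}(m)=0$ for every $\omega\in(A/I)^*$ and $g\in B(G)$, because slice maps separate points of the minimal tensor product and their strict-continuous extensions separate points of its multiplier algebra. Applied to $m=\bar{q\otimes\id}(\delta(a))$, the identity $\bar{\omega\otimes g}\circ\bar{q\otimes\id}=\bar{(\omega\circ q)\otimes g}$ together with the isometric identification $(A/I)^*\cong I\ann$ given by $\omega\mapsto\omega\circ q$ converts condition (2) into
\[
(\omega'\cdot g)(a)=\bar{\omega'\otimes g}\circ\delta(a)=0\quad\text{for all }a\in I,\ \omega'\in I\ann,\ g\in B(G),
\]
which is precisely the statement that $\omega'\cdot g\in I\ann$ whenever $\omega'\in I\ann$ and $g\in B(G)$---exactly condition (3).

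The main obstacle is the separation lemma underlying $(2)\Leftrightarrow(3)$: one must know that the slice functionals $\bar{\omega\otimes g}$, extended strictly from $(A/I\otimes C^*(G))^*$, detect zero in the multiplier algebra. This reduces via a routine approximate-identity argument to the standard facts that $B^*$ separates points of $M(B)$ for any $C^*$-algebra $B$ and that pure-tensor functionals separate points of the minimal tensor product. Once this is in hand, the remaining bookkeeping for $(2)\Rightarrow(1)$ is standard diagram chasing, with \lemref{injective} supplying injectivity for free.
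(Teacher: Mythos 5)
Your argument is correct and follows essentially the same route as the paper: the equivalence of (1) and (2) via the universal property of the quotient together with \lemref{injective} to get injectivity for free, and the equivalence of (2) and (3) via the fact that the slice functionals $\bar{\psi\otimes f}$ separate points of $M(A/I\otimes C^*(G))$. You have simply written out the details that the paper leaves as a "routine calculation," including the identification $(A/I)^*\cong I\ann$ and the separation argument for the multiplier algebra, and these details are all in order.
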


\begin{proof}
This is well-known, but difficult to find in the literature, so we include the brief proof for the convenience of the reader.
There exists a \emph{homomorphism} $\tilde\delta$ making the diagram~\eqref{quotient} commute if and only if 
(2)
holds,
and in that case $\tilde\delta$ will satisfy 
the coaction-nondegeneracy \eqref{coaction nondegenerate}
and the coaction identity \eqref{identity}.
By \lemref{injective} this implies that $\tilde\delta$ is a coaction.
Thus (1)$\Leftrightarrow$(2), and (2)$\Leftrightarrow$(3) follows from a routine calculation using the fact that
$\{\psi\otimes f:\psi\in (A/I)^*,f\in B(G)\}$ separates the elements of $M(A/I\otimes C^*(G))$.
\end{proof}

Recall that the multiplication in $B(G)$ satisfies
\[
\<a,fg\>=\<\delta_G(a),\bar{f\otimes g}\>\quad\text{for $a\in C^*(G)$ and $f,g\in B(G)$,}
\]
where here we use the notation $f\otimes g$ to denote the functional in $(C^*(G)\otimes C^*(G))^*$ determined by
\[
\<x\otimes y,\bar{f\otimes g}\>=f(x)g(y)\quad\text{for $x,y\in G$.}
\]

\begin{rem}\label{max}
Note that we need to explicitly state the above convention for $f\otimes g$, since we are using the minimal tensor product:
if $G$ is a group for which the canonical surjection
\[
C^*(G)\otimes_{\max} C^*(G)\to C^*(G)\otimes C^*(G)
\]
is noninjective\footnote{e.g., any infinite simple group with property T --- see \cite[Theorem~6.4.14 and Remark~6.4.15]{brownozawa}},
then
\begin{align*}
C^*(G)\otimes C^*(G)&\ne C^*(G\times G)
\\
(C^*(G)\otimes C^*(G))^*&\ne B(G\times G),
\end{align*}
because $C^*(G\times G)=C^*(G)\otimes_{\max} C^*(G)$.
\end{rem}

\begin{cor}\label{coaction}
Let $E$ be a weak*-closed $G$-invariant subspace of $B(G)$, and let $q:C^*(G)\to C^*_E(G)$ be the quotient map.
Then there is a coaction $\delta_G^E$ 
of $G$ on $C^*_E(G)$
such that
\[
\bar{\delta_G^E}(q(x))=q(x)\otimes x\quad\text{for $x\in G$}
\]
if and only if $E$ is an ideal of $B(G)$.
\end{cor}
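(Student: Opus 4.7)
The plan is to apply \lemref{quotient coaction} to the canonical coaction $\delta_G$ on $A=C^*(G)$ and the ideal $I={}\ann E$, noting that $A/I=C^*_E(G)$ and that $({}\ann E)\ann=E$ since $E$ is weak*-closed.

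The key computation is to identify the $B(G)$-module action on $A^*=B(G)$ induced by $\delta_G$ with the usual pointwise multiplication in $B(G)$. For $g,f\in B(G)$ and $x\in G\subset M(C^*(G))$, the formula $\bar{\delta_G}(x)=x\otimes x$ gives
\[
(g\cdot f)(x)=\bar{g\otimes f}\circ\bar{\delta_G}(x)=\bar{g\otimes f}(x\otimes x)=g(x)f(x),
\]
so $g\cdot f$ coincides on $G$, and hence as an element of $B(G)$, with the pointwise product $gf$. Consequently condition~(3) of \lemref{quotient coaction}---that $E=({}\ann E)\ann$ be a $B(G)$-submodule of $A^*$---becomes exactly the statement that $E$ is closed under pointwise multiplication by $B(G)$, i.e., that $E$ is an ideal of $B(G)$.

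By \lemref{quotient coaction}, a coaction $\delta_G^E$ on $C^*_E(G)$ making the diagram~\eqref{quotient} commute (with $\delta=\delta_G$) exists if and only if $E$ is an ideal. Extending the commuting diagram to multiplier algebras and evaluating at $x\in G\subset M(C^*(G))$ gives
\[
\bar{\delta_G^E}(q(x))=\bar{\delta_G^E}\circ\bar q(x)=\bar{q\otimes\id}\circ\bar{\delta_G}(x)=q(x)\otimes x,
\]
which is the claimed formula. Conversely, if some coaction $\delta_G^E$ satisfies the formula on $G$, then the nondegenerate homomorphisms $\bar{\delta_G^E}\circ\bar q$ and $\bar{q\otimes\id}\circ\bar{\delta_G}$ from $M(C^*(G))$ to $M(C^*_E(G)\otimes C^*(G))$ agree on $G$, so their integrated forms agree on $C^*(G)$; thus the diagram commutes and \lemref{quotient coaction} forces $E$ to be an ideal.

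I do not anticipate any real obstacle: once the module-action computation is in hand, the corollary is essentially the specialization of \lemref{quotient coaction} to $(A,\delta)=(C^*(G),\delta_G)$. The only minor subtlety is translating between the commuting diagram~\eqref{quotient} and the group-element formula $\bar{\delta_G^E}(q(x))=q(x)\otimes x$, which is handled by the standard extension of nondegenerate homomorphisms to multiplier algebras together with the fact that a representation of $C^*(G)$ is determined by its restriction to $G$.
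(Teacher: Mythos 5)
Your proposal is correct and follows the same route as the paper, which simply observes that since $E=(\ker q)\ann$, the statement is an immediate specialization of \lemref{quotient coaction} to $(A,\delta)=(C^*(G),\delta_G)$. The details you supply --- identifying the induced $B(G)$-module action on $C^*(G)^*=B(G)$ with pointwise multiplication, and translating between the commuting diagram and the formula $\bar{\delta_G^E}(q(x))=q(x)\otimes x$ via nondegeneracy --- are exactly the ones the paper leaves implicit.
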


\begin{proof}
Since $E$ is the annihilator of $\ker q$, this follows immediately from \lemref{quotient coaction}.
\end{proof}

Recall that in \defnref{group algebra} we called $C^*_E(G)$ a group $C^*$-algebra if $E$ is a weak*-closed $G$-invariant subspace of $B(G)$ containing $B_r(G)$;
this latter property is automatic if $E$ is an ideal (as long as it's nonzero):

\begin{lem}\label{smallest}
Every nonzero 
norm-closed $G$-invariant ideal of $B(G)$ contains $A(G)$, and hence
every nonzero weak*-closed $G$-invariant ideal of $B(G)$ contains $B_r(G)$.
\end{lem}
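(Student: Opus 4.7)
My plan is to first handle the norm-closed case and then derive the weak*-closed statement as an immediate corollary. Let $J$ be a nonzero norm-closed $G$-invariant ideal of $B(G)$ and fix some $0\ne f\in J$. I would exploit two facts: $G$-invariance of $J$ keeps every translate of $f$ inside $J$; and $A(G)$ is an ideal of $B(G)$ (a classical theorem of Eymard \cite{eym}), so every product $(x\cdot f)u$ with $x\in G$ and $u\in A(G)$ lies in $J\cap A(G)$. The idea is then to analyze the closed ideal
\[
I=\clspn\{(x\cdot f)u:x\in G,\ u\in A(G)\}
\]
of $A(G)$ (contained in $J$ by construction) and show it equals $A(G)$.

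To that end, I would show the hull of $I$ in the Gelfand spectrum $G$ of $A(G)$ is empty. Given $y\in G$, pick any $z\in G$ with $f(z)\ne 0$ (possible because $f\ne 0$) and set $x=y\inv z$, so that $(x\cdot f)(y)=f(yx)=f(z)\ne 0$; combined with regularity of $A(G)$ to produce $u\in A(G)$ with $u(y)\ne 0$, this exhibits an element of $I$ not vanishing at $y$. Since $A(G)$ is regular and Tauberian on its spectrum $G$, every ideal of $A(G)$ with empty hull is norm-dense, so $\bar I=A(G)$. As $J$ is norm-closed and contains $I$, we get $A(G)\subset J$, establishing the first assertion.

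The second assertion then follows immediately: a nonzero weak*-closed $G$-invariant ideal is in particular norm-closed, so by the first part it contains $A(G)$; being weak*-closed, it must also contain the weak*-closure of $A(G)$ in $B(G)$, which by the remarks in the preliminary section is exactly $B_r(G)$.

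I expect the main obstacle to be justifying the empty-hull-implies-dense step for $A(G)$, since $A(G)$ need not have a bounded approximate identity when $G$ is not amenable. The argument I have in mind is the standard one: approximate an arbitrary element of $A(G)\cap C_c(G)$ (whose norm-closure is $A(G)$) by covering its compact support by finitely many open sets $V_i$ on each of which some translate $x_i\cdot f$ is bounded away from zero, using a partition of unity in the regular Banach algebra $A(G)$ subordinate to the cover, and then invoking a local-division result in $A(G)$ to express each localized piece as $(x_i\cdot f)\cdot w_i$ for some $w_i\in A(G)$. This is classical harmonic-analytic machinery for $A(G)$ and can be referenced rather than reproved.
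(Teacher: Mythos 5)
Your argument is correct and follows essentially the same route as the paper: both use $G$-invariance together with the fact that $A(G)$ is an ideal of $B(G)$ to produce elements of $J\cap A(G)$ not vanishing at any prescribed point of $G$, then invoke a classical density theorem of Eymard for ideals of $A(G)$ (you cite the empty-hull/Tauberian form, the paper cites the nowhere-vanishing-and-point-separating form, \cite[Corollary~3.38]{eym}), and finally pass to the weak*-closure of $A(G)$, which is $B_r(G)$. No gaps.
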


\begin{proof}
Let $E$ be the ideal.
It suffices to show that $E\cap A(G)$ is norm dense in $A(G)$.
There exist $t\in G$ and $f\in E$ such that $f(t)\ne 0$.
By \cite[Lemma~3.2]{eym} there exists 
$g\in A(G)\cap C_c(G)$ 
such that $g(t)\ne 0$, and then $fg\in E\cap C_c(G)$ is nonzero at $t$.
By $G$-invariance of $E$, for all $x\in G$ there exists $f\in E$ such that $f(x)\ne 0$.
Then for any $y\ne x$ we can find $g\in A(G)\cap C_c(G)$ such that $g(x)\ne 0$ and $g(y)=0$, and so $fg\in E$ is nonzero at $x$ and zero at $y$. Thus $E\cap A(G)$ is an ideal of $A(G)$ that is nowhere vanishing on $G$ and separates points, so by \cite[Corollary~3.38]{eym} $E\cap A(G)$ is norm dense in $A(G)$, so we are done.
\end{proof}

Recall that a \emph{comultiplication} on 
a  $C^*$-algebra $A$
is a  homomorphism (which we do \emph{not} in general require to be injective) $\Delta:A\to M(A\otimes A)$ satisfying the \emph{co-associativity} property
\[
\bar{\Delta\otimes\id}\circ\Delta=\bar{\id\otimes\Delta}\circ\Delta
\]
and the \emph{nondegeneracy properties}
\[
\clspn\{\Delta(A)(1\otimes A)\}=A\otimes A=\clspn\{(A\otimes 1)\Delta(A)\}.
\]
A $C^*$-algebra with a comultiplication is called a \emph{$C^*$-bialgebra} (see \cite{kawamura} for this terminology).
A comultiplication $\Delta$ on $A$ is used to make the dual space $A^*$ into a Banach algebra in the standard way:
\[
\omega\psi:=\bar{\omega\otimes\psi}\circ\Delta\quad\text{for $\omega,\psi\in A^*$.}
\]

The following is another folklore result, proved similarly to \lemref{quotient coaction}:

\begin{lem}\label{quotient comultiplication}
If $\Delta:A\to M(A\otimes A)$ 
is a comultiplication on a $C^*$-algebra $A$ and $I$ is an ideal of $A$, then 
the following are equivalent:
\begin{enumerate}
\item
there is a comultiplication $\tilde\Delta$ on $A/I$ making the diagram
\[
\xymatrix{
A \ar[r]^-\Delta \ar[d]_q
&M(A\otimes A) \ar[d]^{\bar{q\otimes q}}
\\
A/I \ar[r]_-{\tilde\Delta}
&M(A/I\otimes A/I)
}
\]
commute \(where $q$ is the quotient map\);

\item
$I\subset \ker \bar{q\otimes q}\circ \Delta$.

\item $I\ann$ is a subalgebra of $A^*$.
\end{enumerate}
\end{lem}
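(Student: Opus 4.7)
The plan is to mimic the proof of \lemref{quotient coaction}, with the only real difference being that no injectivity need be established (comultiplications are not required to be injective). For (1)$\Leftrightarrow$(2), the universal property of $q$ gives a homomorphism $\tilde\Delta:A/I\to M(A/I\otimes A/I)$ making the square commute if and only if $\bar{q\otimes q}\circ\Delta$ kills $I$, which is exactly condition~(2). So the first step is just this reformulation.

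Next I would verify that whenever such a $\tilde\Delta$ exists it is automatically a comultiplication. Co-associativity descends from $\Delta$ by applying $\bar{q\otimes q\otimes q}$ to both sides of $\bar{\Delta\otimes\id}\circ\Delta=\bar{\id\otimes\Delta}\circ\Delta$ and using surjectivity of $q$ to cancel on the left. For the two nondegeneracy properties, applying $\bar{q\otimes q}$ to
\[
\clspn\{\Delta(A)(1\otimes A)\}=A\otimes A=\clspn\{(A\otimes 1)\Delta(A)\}
\]
and using that $q$ (hence $\bar{q\otimes q}$) is surjective gives the corresponding identities with $\tilde\Delta$ in place of $\Delta$. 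In contrast to the coaction case, \lemref{injective} plays no role here.

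For (2)$\Leftrightarrow$(3), I identify $(A/I)^*$ with $I\ann\subset A^*$ via the dual of $q$. Since the family $\{\omega\otimes\psi:\omega,\psi\in(A/I)^*\}$ separates points of $M(A/I\otimes A/I)$, condition~(2) is equivalent to
\[
\<\bar{\omega\otimes\psi}\circ\bar{q\otimes q}\circ\Delta,a\>=0\quad\text{for all $\omega,\psi\in I\ann$ and $a\in I$,}
\]
and unwinding the dual pairings this becomes $\<\omega\psi,a\>=0$, where $\omega\psi=\bar{\omega\otimes\psi}\circ\Delta$ is the product induced on $A^*$ by $\Delta$. Thus (2) says precisely that $\omega\psi\in I\ann$ whenever $\omega,\psi\in I\ann$, i.e., that $I\ann$ is a subalgebra of $A^*$.

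The main obstacle, such as it is, is the bookkeeping needed to interchange strictly continuous extensions $\bar{\,\cdot\,}$ with the maps $q\otimes q$ and $\omega\otimes\psi$ when passing between $A\otimes A$ and its multiplier algebra; this is the same ``routine calculation'' flagged at the end of the proof of \lemref{quotient coaction}, and no new difficulty arises.
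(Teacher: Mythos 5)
Your proposal is correct and is exactly the adaptation of the proof of \lemref{quotient coaction} that the paper intends when it says this lemma is ``proved similarly'': the universal property of the quotient for (1)$\Leftrightarrow$(2), descent of co-associativity and nondegeneracy via surjectivity of $q$ (with the correct observation that \lemref{injective} is not needed since comultiplications need not be injective), and the separating family of product functionals for (2)$\Leftrightarrow$(3).
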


We apply this to
the canonical comultiplication $\delta_G$ on $C^*(G)$:

\begin{prop}\label{comultiplication}
Let $E$ be a weak*-closed $G$-invariant subspace of $B(G)$, and let $q:C^*(G)\to C^*_E(G)$ be the quotient map.
Then the following are equivalent:
\begin{enumerate}
\item
there is a comultiplication $\Delta$ making the diagram
\[
\xymatrix@C+30pt{
C^*(G) \ar[r]^-{\delta_G} \ar[d]_q
&M(C^*(G)\otimes C^*(G)) \ar[d]^{\bar{q\otimes q}}
\\
C^*_E(G) \ar[r]_-\Delta
&M(C^*_E(G)\otimes C^*_E(G))
}
\]
commute;

\item
${}\ann E\subset \ker\bar{q\otimes q}\circ\delta_G$;

\item
$E$ is a subalgebra of $B(G)$.
\end{enumerate}
\end{prop}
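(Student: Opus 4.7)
The plan is to deduce this proposition directly from \lemref{quotient comultiplication} applied to the ambient $C^*$-bialgebra $(C^*(G),\delta_G)$, with the ideal $I = {}\ann E$. Since $E$ is weak*-closed, we have $({}\ann E)\ann = E$ under the identification $B(G) = C^*(G)^*$, so the quotient $A/I$ appearing in \lemref{quotient comultiplication} is precisely $C^*_E(G)$ and the quotient map is $q$. The equivalence (1)$\Leftrightarrow$(2) is then just the direct translation of the corresponding equivalence in \lemref{quotient comultiplication}, and (2)$\Leftrightarrow$(3) reduces to showing that condition (3) of \lemref{quotient comultiplication}, namely that $E = ({}\ann E)\ann$ be a subalgebra of $C^*(G)^*$ in the Banach-algebra structure induced by $\delta_G$, coincides with $E$ being a subalgebra of $B(G)$ in the usual pointwise sense.

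First I would set up (1)$\Leftrightarrow$(2) by invoking \lemref{quotient comultiplication} verbatim: the commuting diagram in (1) is the same diagram appearing in that lemma, and condition (2) is precisely the kernel-containment condition. Next, for (2)$\Leftrightarrow$(3), I would observe that the multiplication on $A^* = C^*(G)^*$ induced by $\delta_G$ via the recipe $\omega\psi = \bar{\omega\otimes\psi}\circ\delta_G$ is exactly the pointwise multiplication on $B(G)$: this is precisely the identity
\[
\<a,fg\> = \<\delta_G(a),\bar{f\otimes g}\>\quad\text{for $a\in C^*(G)$, $f,g\in B(G)$}
\]
recalled immediately before the proposition statement. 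Consequently, $E$ is a subalgebra of $B(G)$ with its usual product if and only if $E$ is a subalgebra of $C^*(G)^*$ with the product induced by $\delta_G$, which by \lemref{quotient comultiplication} is equivalent to (2).

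The one point that requires a brief sanity check, rather than an obstacle per se, is the minimal-tensor-product issue flagged in \remref{max}: the functional $\bar{f\otimes g}$ appearing in the formula must be interpreted on the minimal tensor product $C^*(G)\otimes C^*(G)$, so that it makes sense to pair it with $\delta_G(a)\in M(C^*(G)\otimes C^*(G))$. Since we are working throughout with the minimal tensor product and both \lemref{quotient comultiplication} and the recalled multiplication formula use this same tensor product, there is no discrepancy and the argument goes through uniformly. I expect the proof to be short enough that the entire argument is essentially ``combine the lemma with the recalled identity.''
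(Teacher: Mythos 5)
Your proposal is correct and is exactly the argument the paper intends: the paper offers no separate proof of Proposition~\ref{comultiplication} beyond the sentence ``We apply this [Lemma~\ref{quotient comultiplication}] to the canonical comultiplication $\delta_G$ on $C^*(G)$,'' relying on the recalled identity $\<a,fg\>=\<\delta_G(a),\bar{f\otimes g}\>$ to match pointwise multiplication in $B(G)$ with the product induced by $\delta_G$ on $C^*(G)^*$. Your additional checks (that $({}\ann E)\ann=E$ for weak*-closed $E$, and that the minimal tensor product convention is used consistently) are correct and only make explicit what the paper leaves implicit.
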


\begin{rem}\label{hopf}
\propref{comultiplication} tells us that if $E$ is a weak*-closed $G$-invariant subalgebra of $B(G)$, then
the group algebra $C^*_E(G)$ is a $C^*$-bialgebra.
However, this 
probably
does not make $C^*_E(G)$ a 
locally compact quantum group,
since this would require an antipode.
It might be difficult to investigate the general question of whether there exists \emph{some} antipode on $C^*_E(G)$ that is compatible with the comultiplication;
it seems more reasonable to ask whether the quotient map $q:C^*(G)\to C^*_E(G)$ takes the canonical antipode on $C^*(G)$ to an antipode on $C^*_E(G)$.
This requires $E$ to be closed under inverse 
i.e., if $f\in E$ then so is the function $f^\vee$ defined by $f^\vee(x)=f(x\inv)$.
Now, $f^\vee(x)=\bar{f^*(x)}$
where $f^*$ is defined by $f^*(a)=\bar{f(a^*)}$ for $a\in C^*(G)$.
Since $f\in E$ if and only if $f^*\in E$,
we see that $E$ is invariant under $f\mapsto f^\vee$ if and only if
it is invariant under complex conjugation.
In all our examples (in particular \secref{classical}) $E$ has this property.
%\wrong{
%and 
%we suspect that this is not always satisfied when $G$ is nonunimodular,
%e.g., for the case}
%\[
%\dwrong{E=\wkstcl{L^p(G)\cap B(G)}.}
%\]
Note that $C^*_E(G)$ always has a Haar weight, since we can compose the canonical Haar weight on $C^*_r(G)$ with the quotient map $C^*_E(G)\to C^*_r(G)$.
However, this Haar weight on $C^*_E(G)$ is faithful if and only if $E=B_r(G)$.
\end{rem}

\begin{rem}\label{closed}
By \lemref{DJ},
if $E$ is 
a $G$-invariant
ideal of $B(G)$
and $I={}\ann E$,
then
$\bar E$ is
also a $G$-invariant ideal,
so by \propref{coaction}
there is a coaction $\delta_G^E$ of $G$ on 
$C^*_E(G)$
such that
\[
\bar{\delta_G^E}(q(x))=q(x)\otimes x\quad\text{for $x\in G$,}
\]
where 
$q:C^*(G)\to C^*_E(G)$ is the quotient map.

Similarly, if $E$ is 
a $G$-invariant
subalgebra of $B(G)$
then
$\bar E$ is 
also a $G$-invariant subalgebra,
so by \propref{comultiplication}
there is a comultiplication $\Delta$ on 
$C^*_E(G)$
such that
\[
\bar{\Delta}(q(x))=q(x)\otimes q(x)\quad\text{for $x\in G$.}
\]
\end{rem}

\begin{ex}
Note that if the quotient $C^*_E(G)$ is a \ga, then the quotient map $q:C^*(G)\to C^*_E(G)$ is faithful on $C_c(G)$,
and so by \lemref{DJ} $C^*_E(G)$ is the completion of $C_c(G)$ in the associated norm $\|\cdot\|_E$.
However, 
$q$ being faithful on $C_c(G)$
is not sufficient for $C^*_E(G)$ to be a \ga.
The simplest example of this is in \cite[Exercise~XI.38]{FellDoran2} (which we modify only slightly): let $0\le a<b<2\pi$, and define a surjection
\[
q:C^*(\Z)\to C[a,b]
\]
by
\[
q(n)(t)=e^{int}.
\]
Then the unitaries $q(n)$ are linearly independent, 
so $q$ is faithful on $c_c(\Z)$,
but $q(C^*(\Z))$ is not a \ga\ because $\ker q$ is a nontrivial ideal of $C^*(\Z)$ and $\Z$ is amenable, so that $\ker\lambda=\{0\}$.
\end{ex}

\begin{ex}
The paper
\cite{EQInduced} shows how to construct exotic \ga s $C^*_E(G)$ 
(see also \cite[Remark~9.6]{Kyed} for similar exotic quantum groups)
with no coaction:
let
\[
q=\lambda\oplus 1_G,
\]
where $1_G$ denotes the trivial $1$-dimensional representation of $G$.
The quotient $C^*_E(G)$ is a \ga\ since $\ker q=\ker\lambda\cap \ker 1_G$.
On the other hand, we have
\[
E=(\ker q)\ann=B_r(G)+\C1_G,
\]
which is not an ideal of $B(G)$ unless it is all of $B(G)$, i.e., unless $q$ is faithful;
as remarked in \cite{EQInduced}, this behavior would be quite bizarre,
and in fact we do not know of any discrete nonamenable group with this property.

However, these quotients $C^*_E(G)$ are $C^*$-bialgebras, because $B_r(G)+\C 1_G$ is a subalgebra of $B(G)$.
Thus, these quotients give 
examples of exotic group $C^*$-bialgebras that are different from those in \cite[Proposition~4.4 and Remark~4.5]{BrownGuentner}. It is interesting to note that these quotients of $C^*(G)$ are of a decidedly elementary variety:
by \lemref{onto} we have
\[
C^*_E(G)=C^*_r(G)\oplus \C,
\]
because $C^*(G)=\ker\lambda+\ker 1_G$ since $G$ is nonamenable.
To see this latter implication, recall that if $G$ is nonamenable
then $1_G$ is not weakly contained in $\lambda$,
so $\ker 1_G\not\supset \ker\lambda$,
and hence $C^*(G)=\ker\lambda+\ker 1_G$ since $\ker 1_G$ is a maximal ideal.

Valette has a similar example in \cite[Theorem~3.6]{valetteT} where he shows that if $N$ is a closed normal subgroup of $G$ that has property (T), then 
$C^*(G)$ is the direct sum of $C^*(G/N)$ and a complementary ideal.

For a different source of exotic group $C^*$-bialgebras, see \exref{maxmin}.
\end{ex}

\begin{ex}
We can also find examples of \ga s with no comultiplication: modify the preceding example by taking
\[
q=\lambda\oplus \gamma,
\]
where $\gamma$ is a nontrivial character of $G$ (assuming that $G$ has such characters).
Then
\[
(\ker q)\ann=B_r(G)+\C \gamma,
\]
which is not a subalgebra of $B(G)$ when $G$ is nonamenable.
\end{ex}

\begin{ex}\label{maxmin}
Let $G$ be a locally compact group for which
the canonical surjection
\begin{equation}\label{max onto}
C^*(G)\otimes_{\max} C^*(G)\to C^*(G)\otimes C^*(G)
\end{equation}
is not injective,
where in the second tensor product we use the minimal $C^*$-tensor norm as usual (see \remref{max}).
Let $I$ denote the kernel of this map.
Since the algebraic product $B(G)\odot B(G)$ is weak*-dense in $(C^*(G)\otimes C^*(G))^*$, the annihilator $E=I\ann$ is the weak*-closed span of functions of the form
\[
(x,y)\mapsto f(x)g(y)\quad\text{for $f,g\in B(G)$.}
\]
This is clearly a subalgebra, but not an ideal, because it contains $1$.
Also, $E\supset B_r(G\times G)$ because the surjection \eqref{max onto} can be followed by
\[
C^*(G)\otimes C^*(G)\to C^*_r(G)\otimes C^*_r(G)\cong C^*_r(G\times G).
\]
Thus the canonical coaction $\delta_{G\times G}$ of $G\times G$ on $C^*(G\times G)$ descends to a comultiplication on the \ga\ $C^*_E(G\times G)\cong C^*(G)\otimes C^*(G)$, but not to a coaction of $G\times G$.
\end{ex}

\section{Classical ideals}\label{classical}

We continue to let $G$ be an arbitrary locally compact group.

We will apply the theory of the preceding sections to
\ga s $C^*_E(G)$ with $E$
of the form
\[
E=D\cap B(G),
\]
where $D$ is some familiar 
$G$-invariant 
set of functions on $G$.

\begin{notn}
If $D$ is a $G$-invariant set of functions on $G$, we write
$\|f\|_D=\|f\|_{D\cap B(G)}$, and similarly $C^*_D(G)=C^*_{D\cap B(G)}(G)$.
\end{notn}

So, for instance, we can consider $C^*_{C_c}(G)$, $C^*_{C_0(G)}(G)$, and $C^*_{L^p(G)}(G)$.
In each of these cases the intersection $E=D\cap B(G)$ is a $G$-invariant ideal of $B(G)$,
so by 
\remref{closed} and \lemref{smallest}
these quotients are all group $C^*$-algebras carrying coactions of $G$, and hence by \propref{comultiplication} they carry comultiplications. 
In the case that $G$ is discrete, $c_c(G)$, $c_0(G)$, and $\ell^p(G)$ could be regarded as classical ideals of $\ell^\infty(G)$;
this is the context of Brown and Guentner's ``new completions of discrete groups'' \cite{BrownGuentner}.

We have
\[
C^*_{C_c(G)}(G)=C^*_{A(G)}(G)=C^*_r(G),
\]
because $C_c(G)\cap B(G)$ is norm dense in $A(G)$, and hence weak*-dense in $B_r(G)$.
However, the  quotients $\csZg$ and $\csLpg$ are more mysterious.
Nevertheless, we have the following
(which, for the case of discrete $G$, is \cite[Proposition~2.11]{BrownGuentner}):

\begin{prop}\label{l2}
For all $p\le 2$ we have
$\csLpg=C^*_r(G)$.
\end{prop}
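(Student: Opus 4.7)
The plan is to verify $\wkstcl{\Lpg\cap\bg} = \brg$, which by \obsref{immed}(2) gives $\csLpg = \csrg$. I proceed in three steps.

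First, I would reduce $p\le 2$ to $p = 2$. Every $f \in \bg$ is bounded, so if $f \in \Lpg$ with $p\le 2$, the inequality $|f|^2 \le \|f\|_\infty^{2-p}|f|^p$ shows $f\in L^2(G)$; thus $\Lpg\cap\bg \subseteq L^2(G)\cap\bg$, and it suffices to handle $p=2$ plus the easy inclusion for all $p$.

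Next, the inclusion $\wkstcl{\Lpg\cap\bg} \supseteq \brg$ holds for every $p\ge 1$: compactly supported continuous functions lie in $\Lpg$, so $\Cg\cap\bg \subseteq \Lpg\cap\bg$, and the discussion preceding \lemref{ccg} asserts that $\Cg\cap\bg$ is already weak*-dense in $\brg$.

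For the reverse inclusion, since $\brg$ is weak*-closed it suffices to show $L^2(G)\cap\bg\subseteq\brg$. The key classical input is Godement's theorem: every continuous positive definite $f\in L^2(G)$ factors as $f = g*\tilde g$ with $g\in L^2(G)$, realizing $f$ as a diagonal matrix coefficient of the regular representation and placing $f\in\ag\subseteq\brg$. For an arbitrary $f\in L^2(G)\cap\bg$, I would reduce to the positive definite case by writing $f(\cdot) = \<\pi(\cdot)\xi,\eta\>$ for a cyclic representation $\pi$ and expanding via polarization as $4f = \sum_{k=0}^{3} i^k\,\<\pi(\cdot)(\xi + i^k\eta),\,\xi + i^k\eta\>$, in which each summand is positive definite.

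The main obstacle is that the four polarization summands are not obviously in $L^2(G)$ even when $f$ is, so Godement does not apply termwise to the naive polarization. I would resolve this by appealing to the Jordan decomposition inside the von Neumann predual $\bg = W^*(G)_*$: decomposing the Hermitian and anti-Hermitian parts of $f$ (which remain in $L^2(G)\cap\bg$ since $\text{Re}(f)$ and $\text{Im}(f)$ are in $L^2$) into their positive and negative parts in $W^*(G)_*$. The $L^2$-condition on an element of $\bg$ amounts to ``absolute continuity with respect to the Plancherel weight'' on $W^*(G)$, and this absolute continuity is preserved under Jordan decomposition; so each summand is a positive definite function that still lies in $L^2(G)$, and Godement's theorem now applies termwise to yield $f\in\brg$.
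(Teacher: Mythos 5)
Your skeleton matches the paper's exactly: the reduction from $p\le 2$ to $p=2$ via boundedness of elements of $\bg$, the inclusion $\wkstcl{\Lpg\cap\bg}\supseteq\brg$ via $\Cg\cap\bg$, and the reduction of everything to the single inclusion $L^2(G)\cap\bg\subseteq\brg$ are all as in the paper. For that last inclusion the paper quotes the classical square-integrability theorem (Carey): a cyclic representation whose cyclic vector has a square-integrable diagonal coefficient is contained in $\lambda$, whence $L^2(G)\cap\bg\subseteq\ag$. Your Godement factorization is exactly the positive-definite case of this, so up to that point the two arguments agree in substance.

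The genuine gap is in your reduction of a general $f\in L^2(G)\cap\bg$ to the positive-definite case. First, the Hermitian part of $f$ \emph{as a functional on $C^*(G)$} is $\tfrac12(f+f^*)$ with $f^*(x)=\overline{f(x^{-1})}$, not $\operatorname{Re}f$: self-adjointness of the functional and real-valuedness of the function are different conditions (a nontrivial character of an abelian group is Hermitian but not real-valued), and for non-unimodular $G$ the function $x\mapsto\overline{f(x^{-1})}$ need not lie in $L^2(G)$ when $f$ does, so already the Hermitian/anti-Hermitian splitting can leave $L^2(G)$. Second, the justification you offer for the key claim --- that membership in $L^2(G)$ for an element of $\bg$ ``amounts to absolute continuity with respect to the Plancherel weight'' --- is not correct: already for $G=\R$, absolute continuity of the representing measure on $\widehat G$ with respect to Haar measure characterizes membership in $\ag$, whereas $f\in L^2(G)$ corresponds to the density being in $L^2(\widehat G)$; the right reformulation of the $L^2$ condition is the domination $|\<a,f\>|\le C\,\varphi(a^*a)^{1/2}$ for the Plancherel weight $\varphi$. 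With that formulation one can check that the Jordan components of a Hermitian $L^2$-dominated functional are again $L^2$-dominated \emph{when $\varphi$ is a trace}, i.e.\ when $G$ is unimodular, but the proposition is asserted for arbitrary locally compact $G$, so the obstacle you correctly identified as the main one is not actually overcome by your argument. The clean resolution is the paper's: invoke the square-integrable-representation theorem for a cyclic vector rather than polarizing.
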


\begin{proof}
Since $L^p(G)\cap B(G)$ consists of bounded functions,
for $p\le 2$ we have
\[
C_c(G)\cap B(G)\subset L^p(G)\cap B(G)\subset L^2(G)\cap B(G).
\]
Now, 
if $U$ is a representation of $G$ having a cyclic vector $\xi$ such that the function $x\mapsto \<U_x\xi,\xi\>$ is in $L^2(G)$, then $U$ is contained in $\lambda$
(see, e.g., \cite{carey}),
and consequently $L^2(G)\cap B(G)\subset A(G)$.
Thus
\begin{align*}
B_r(G)
&=\wkstcl{C_c(G)\cap B(G)}
\\&\subset \wkstcl{L^p(G)\cap B(G)}
\\&\subset \wkstcl{L^2(G)\cap B(G)}
\\&\subset \wkstcl{A(G)}
\\&=B_r(G),
\end{align*}
and the result follows.
\end{proof}

\begin{rem}
\begin{enumerate}
\item
The proof of \propref{l2} is much easier when $G$ is discrete,
because
then for $\xi\in \ell^2(G)$ we have
\[
\xi(x)=\<\lambda_x\Chi_{\{e\}},\bar\xi\>,
\]
so $\ell^2(G)\subset A(G)$.

\item
In general, 
$\wkstcl{C_0(G)\cap B(G)}\supset B_r(G)$,
and the containment can be proper 
(for perhaps the earliest result along these lines, see \cite{menchoff}).
When $G$ is discrete, this phenomenon occurs precisely when $G$ is a-T-menable but nonamenable, by the result of \cite{BrownGuentner} mentioned in the introduction.

\item
Using the method outlined in this section, if we start with a $G$-invariant ideal $D$ of $L^\infty(G)$ and put $E=\wkstcl{D\cap B(G)}$, we get many weak*-closed ideals of $B(G)$, but probably not all.
For example, if we let $z_F$ be the supremum in the universal enveloping von Neumann algebra $W^*(G)=C^*(G)^{**}$ of the support projections of finite dimensional representations of $G$, then 
it follows from
\cite[Proposition~1, Theorem~2, Proposition~8]{walterstructure}
that $(1-z_F)\cdot B(G)$ is an ideal of $B(G)$
and $z_F\cdot B(G)=AP(G)\cap B(G)$ is a subalgebra.
It seems unlikely that for all locally compact groups $G$ the ideal $(1-z_F)\cdot B(G)$ arises as an intersection $D\cap B(G)$ for an ideal $D$ of $L^\infty(G)$.
\end{enumerate}
\end{rem}

\section{Graded algebras}\label{discrete}

In this
short
section we impose the condition that the group $G$ is discrete.
We made this a separate section for the purpose of clarity --- here the assumptions on $G$ are different from everywhere else in this paper.
\cite[Definition~3.1]{ExelAmenability} and \cite[VIII.16.11--12]{FellDoran2} define $G$-graded $C^*$-algebras as certain quotients of Fell-bundle algebras\footnote{\cite{ExelAmenability, FellDoran2} would require the images of the fibres to be linearly independent.}. When the fibres of the Fell bundle are $1$-dimensional, each one consists of scalar multiplies of a unitary. When these unitaries can be chosen to form a representation of $G$, 
the $C^*$-algebra is a quotient $C^*_E(G)$.

The following can be regarded as a special case of \cite[Theorem~3.3]{ExelAmenability}:

\begin{prop}\label{graded}
Let $E$ be a weak*-closed $G$-invariant subspace of $B(G)$,
and let $q:C^*(G)\to C^*_E(G)$ be the quotient map.
Then the following are equivalent:
\begin{enumerate}
\item
$C^*_E(G)$ is a \ga\ in the sense of \defnref{group algebra};

\item
there is a bounded linear functional $\omega$ on $C^*_E(G)$ such that
\[
\omega(q(x))=\begin{cases}
1\case x=e\\
0\case x\ne e;
\end{cases}
\]

\item
$E$ contains the canonical trace $\tr$ on $C^*(G)$;

\item
$E\supset B_r(G)$;

\item
there is a \(unique\) homomorphism $\rho:C^*_E(G)\to C^*_r(G)$ 
making the diagram
\[
\xymatrix{
C^*(G) \ar[dr]^q \ar[dd]_\lambda
\\
&C^*_E(G) \ar@{-->}[dl]^\rho_{!}
\\
C^*_r(G)
}
\]
commute.
\end{enumerate}
\end{prop}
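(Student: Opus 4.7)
The proof plan is to observe that the equivalence (1) $\Leftrightarrow$ (4) $\Leftrightarrow$ (5) is nothing new: conditions (4) and (5) here are literally the conditions (2) and (5) of \lemref{ccg}, while condition (1) of the proposition is \defnref{group algebra}, which \emph{is} the equivalent conditions (1)--(4) of \lemref{ccg}. So the real content is in linking the new conditions (2) and (3) to the chain.

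Next I would handle the easiest implication, (4) $\Rightarrow$ (3). The canonical trace $\tr$ on $C^*(G)$ for discrete $G$ is just the vector functional $a \mapsto \<\lambda(a)\delta_e,\delta_e\>$, so $\tr \in B_r(G)$, and if $E \supset B_r(G)$ then $\tr \in E$.

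Then I would show (3) $\Leftrightarrow$ (2) using the identification $E = C^*_E(G)^*$ under $q^*$. If $\tr \in E$, let $\omega \in C^*_E(G)^*$ be the functional with $q^*\omega = \tr$; then $\omega(q(x)) = \tr(x) = \delta_{x,e}$. Conversely, given $\omega$ as in (2), the functional $\omega\circ q \in B(G)$ agrees with $\tr$ on the generators $\{x : x\in G\}$ by the formula and hence equals $\tr$ on $C^*(G)$; but $\omega\circ q = q^*\omega$ lies in $E$, so $\tr \in E$.

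The main obstacle is (3) $\Rightarrow$ (4), which is where discreteness of $G$ is crucial. Since $G$ is discrete, $\tr$ is the characteristic function $\chi_{\{e\}}$, and for each $x \in G$ the translate $(x\cdot \tr)(y) = \tr(yx) = \chi_{\{x\inv\}}(y)$. By $G$-invariance of $E$, every $\chi_{\{x\}}$ lies in $E$, so $E$ contains the span $c_c(G)$. In particular $c_c(G) \subset B(G)$, so $c_c(G) = C_c(G)\cap B(G)$, and by the fact recorded in the example in \secref{certain quotients} this intersection is weak*-dense in $B_r(G)$. Since $E$ is weak*-closed, $E \supset B_r(G)$, giving (4). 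Thus the whole cycle closes, and the proof is complete once I record that (1) $\Leftrightarrow$ (4) $\Leftrightarrow$ (5) is \lemref{ccg}.
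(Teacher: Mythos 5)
Your proposal is correct and follows essentially the same route as the paper: (2)$\Leftrightarrow$(3) via the identification $q^*:C^*_E(G)^*\iso E$, the link to (4) via the fact that $B_r(G)$ is the weak*-closed $G$-invariant subspace generated by $\tr=\Chi_{\{e\}}$ (which you spell out by noting the translates of $\tr$ span $c_c(G)=C_c(G)\cap B(G)$, weak*-dense in $B_r(G)$), and the remaining equivalences by citing \lemref{ccg}. The only difference is that the paper compresses your two implications (3)$\Rightarrow$(4) and (4)$\Rightarrow$(3) into that single generating statement.
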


\begin{proof}
Assuming (2),
the composition $\omega\circ q$ 
coincides with $\tr$,
so $\tr\in E$, and conversely if $\tr\in E$ then we get a suitable $\omega$. Thus (2) $\Leftrightarrow$ (3).

For the rest, just note that $B_r(G)=(\ker\lambda)\ann$ is the weak*-closed $G$-invariant subspace generated by $\tr=\Chi_{\{e\}}$, and appeal to \lemref{ccg}.
\end{proof}

\begin{rem}
Condition (2) in \propref{graded} is precisely what
Exel's
\cite[Definition~3.4]{ExelAmenability}
would require to say that $C^*_E(G)$ is \emph{topologically graded}.
\end{rem}

\section{Exotic coactions}\label{exotic coaction}

We return to the context of an arbitrary locally compact group $G$.

The  coactions appearing in noncommutative crossed-product duality come in a variety of flavors: \emph{reduced} vs. \emph{full} (see \cite[Appendix]{enchilada} or \cite{boiler}, for example), and, among the full ones, a spectrum with \emph{normal} and \emph{maximal} coactions at the extremes (see \cite{ekq}, for example).
In this concluding section we briefly propose a new program in crossed-product duality: ``exotic coactions'', involving the exotic \ga s $C^*_E(G)$ in the sense of \defnref{group algebra}.
From now until \propref{coaction bialgebra}
we are concerned with nonzero $G$-invariant weak*-closed ideals $E$ of $B(G)$.

By Lemmas~\ref{ccg} and \ref{smallest}
the quotient $C^*_E(G)=C^*(G)/{{}\ann E}$ is a \ga.
By \propref{coaction}, there is a coaction $\delta_G^E$ of $G$ on $C^*_E(G)$ 
making the diagram
\[
\xymatrix{
C^*(G) \ar[r]^-{\delta_G} \ar[d]_q
&M(C^*(G)\otimes C^*(G)) \ar[d]^{\bar{q\otimes\id}}
\\
C^*_E(G) \ar[r]_-{\delta_G^E}
&M(C^*_E(G)\otimes C^*(G))
}
\]
commute, where $q$ is the quotient map,
and by \propref{comultiplication} there is a quotient comultiplication $\Delta$ on $C^*_E(G)$.
Recall that we defined the \emph{exotic}
\ga s
to be the ones strictly between the two extremes $C^*(G)$ and $C^*_r(G)$, corresponding to $E=B(G)$ and $E=B_r(G)$, respectively.

On one level, we could try to study coactions of Hopf $C^*$-algebras associated to the locally compact group $G$ other than $C^*(G)$ and $C^*_r(G)$.
But there is an inconvenient subtlety here (see \remref{hopf}).
However, there is a deeper level to this program, relating more directly to crossed-product duality. At the deepest level, we aim for a characterization of \emph{all} coactions of $G$ in terms of the quotients $C^*_E(G)$. 
We hasten to emphasize that at this time 
some of the following is speculative, and is intended merely to outline a program of study.

From now on, the unadorned term ``coaction'' will refer to a full coaction of $G$ on a $C^*$-algebra $A$.

Let $\psi:(A^m,\delta^m)\to (A,\delta)$ be the maximalization of $\delta$,
so that $\delta^m$ is a maximal coaction, $\psi:A^m\to A$ is an equivariant surjection, and the crossed-product surjection
\[
\psi\times G:A^m\rtimes_{\delta^m} G\to A\rtimes_\delta G
\]
(for the existence of which, see \cite[Lemma~A.46]{enchilada}, for example)
is an isomorphism.
Since $\delta^m$ is maximal, the canonical surjection
\[
\Phi:A^m\rtimes_{\delta^m} G\rtimes_{\widehat{\delta^m}} G\to A^m\otimes\KK(L^2(G))
\]
is an isomorphism (this is ``full-crossed-product duality'').
Blurring the distinction between $A^m\rtimes_{\delta^m} G$ and the isomorphic crossed product $A\rtimes_\delta G$, 
and recalling that $\psi\times G:A^m\rtimes_{\delta^m} G\to A\rtimes_\delta G$ is $\widehat{\delta^m}-\widehat\delta$ equivariant,
we can regard $\Phi$ as an isomorphism
\[
\xymatrix{
A\rtimes_{\delta} G\rtimes_{\widehat\delta} G \ar[r]^-\Phi_-\cong
&A^m\otimes\KK(L^2(G)).
}
\]
We have a surjection
\[
\psi\otimes\id:A^m\otimes\KK(L^2(G))\to A\otimes\KK(L^2(G)),
\]
whose kernel is $(\ker\psi)\otimes\KK(L^2(G))$ since $\KK(L^2(G))$ is nuclear.
Let $K_\delta$ be the inverse image under $\Phi$ of this kernel, giving an ideal of $A\rtimes_{\delta} G\rtimes_{\widehat\delta} G$ and an isomorphism $\Phi_\delta$ making the diagram
\begin{equation}\label{Q}
\xymatrix@C+30pt{
A\rtimes_\delta G\rtimes_{\widehat\delta} G
\ar[r]^-\Phi_-\cong \ar[d]_Q
&A^m\otimes \KK(L^2(G))
\ar[d]^{\psi\otimes\id}
\\
(A\rtimes_\delta G\rtimes_{\widehat\delta} G)/K_\delta
\ar[r]_-{\Phi_\delta}^-\cong
&A\otimes \KK(L^2(G))
}
\end{equation}
commute, where $Q$ is the quotient map.
Adapting the techniques of \cite[Theorem~3.7]{eq:full}\footnote{This is a convenient place to correct a slip in the last paragraph of the proof of \cite[Theorem~3.7]{eq:full}: ``contains'' should be replaced by ``is contained in'' (both times).},
it is not hard to see that $K_\delta$ is contained in the kernel of the regular representation 
$\Lambda:A\rtimes_\delta G\rtimes_{\widehat\delta} G\to A\rtimes_\delta G\rtimes_{\widehat\delta,r} G$.

If $\delta$ is maximal, then 
diagram~\ref{Q} collapses to a single row.
On the other hand, if $\delta$ is normal, then $Q$ is the regular representation $\Lambda$ and in particular
\[
(A\rtimes_\delta G\rtimes_{\widehat\delta} G)/K_\delta=A\rtimes_\delta G\rtimes_{\widehat\delta,r} G.
\]
(In this case the isomorphism $\Phi_\delta$ is ``reduced-crossed-product duality''.)

With the ultimate goal 
(which at this time remains elusive
--- see Conjectures~\ref{E-coaction} and \ref{E dual})
of achieving an ``$E$-crossed-product duality'',
intermediate between full- and reduced-crossed-product dualities,
below we will propose tentative definitions of 
``$E$-crossed-product duality''
and ``$E$-crossed products'' $B\rtimes_{\alpha,E} G$ by actions $\alpha:G\to \aut B$,
and we will prove that they have the following properties:
\begin{enumerate}
\item a coaction satisfies $B(G)$-\duality\ if and only if it is maximal.
\item a coaction satisfies $B_r(G)$-\duality\ if and only if it is normal.
\item $B\rtimes_{\alpha,B(G)} G=B\rtimes_\alpha G$.
\item $B\rtimes_{\alpha,B_r(G)} G=B\rtimes_{\alpha,r} G$.
\item The dual coaction $\hat\alpha$ on the full crossed product $B\rtimes_\alpha G$ 
satisfies $B(G)$\duality.
\item The dual coaction $\hat\alpha^n$ on the reduced crossed product $B\rtimes_{\alpha,r} G$ 
satisfies $B_r(G)$\duality.
\item In general, $B\rtimes_{\alpha,E} G$ is a quotient of $B\rtimes_\alpha G$ by an ideal contained in the kernel of the regular representation
\[
\Lambda:B\rtimes_\alpha G\to B\rtimes_{\alpha,r} G.
\]
\item There is a dual coaction $\hat\alpha_E$ of $G$ on $B\times_{\alpha,E} G$.
\end{enumerate}

\begin{defn}\label{J_E}
Define an ideal $J_{\alpha,E}$ of the crossed product $B\rtimes_\alpha G$ by
\[
J_{\alpha,E}=\ker \bar{\id\otimes q}\circ \hat\alpha,
\]
and define the \emph{$E$-crossed product} by
\[
B\rtimes_{\alpha,E} G=(B\rtimes_\alpha G)/J_{\alpha,E}.
\]
\end{defn}

Note that the above properties (1)--(7) are obviously satisfied (because $\hat\alpha$ is maximal and $\hat\alpha^n$ is normal), and we now verify that (8) holds as well:

\begin{thm}\label{dual}
Let $E$ be a nonzero weak*-closed $G$-invariant ideal of $B(G)$, and
let $Q:B\rtimes_\alpha G\to B\rtimes_{\alpha,E} G$ be the quotient map.
Then there is a coaction $\hat\alpha_E$ 
making the diagram
\[
\xymatrix{
B\rtimes_\alpha G \ar[r]^-{\hat\alpha} \ar[d]_{Q}
&M((B\rtimes_\alpha G)\otimes C^*(G)) \ar[d]^{\bar{Q\otimes\id}}
\\
B\rtimes_{\alpha,E} G \ar[r]_-{\hat\alpha_E}
&M((B\rtimes_{\alpha,E} G)\otimes C^*(G))
}
\]
commute.
\end{thm}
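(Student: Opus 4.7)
The plan is to apply \lemref{quotient coaction} to the coaction $\hat\alpha$ on $A := B\rtimes_\alpha G$ and the ideal $I := J_{\alpha,E}$; once the hypotheses of that lemma are verified, the quotient coaction $\hat\alpha_E$ drops out immediately. The most convenient equivalent form to check is condition~(3): that $J_{\alpha,E}\ann$ is a right $B(G)$-submodule of $A^*$ under the module action $\omega\cdot f = \bar{\omega\otimes f}\circ\hat\alpha$ induced by $\hat\alpha$.

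First I would unpack the definition of $J_{\alpha,E}$ in dual terms. Identifying $E = C^*_E(G)^*$ with its image in $B(G)$ via $q^*$, the identity $\bar{\omega\otimes f}\circ\bar{\id\otimes q} = \bar{\omega\otimes(f\circ q)}$ shows that $a \in \ker\bar{\id\otimes q}\circ\hat\alpha$ if and only if $(\omega\cdot f)(a) = 0$ for every $\omega\in A^*$ and $f\in E$. Consequently $J_{\alpha,E}\ann$ is exactly the weak*-closure of the linear subspace $A^*\cdot E\subset A^*$.

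Next I would use the hypothesis that $E$ is an ideal of $B(G)$. The standard module identity $(\omega\cdot f)\cdot g = \omega\cdot(fg)$ together with $E\cdot B(G)\subset E$ gives $(A^*\cdot E)\cdot B(G)\subset A^*\cdot E$, so $A^*\cdot E$ is already a right $B(G)$-submodule of $A^*$. That this property survives weak*-closure follows from a one-line continuity check: for each fixed $g\in B(G)$, the map $\omega\mapsto \omega\cdot g$ is weak*-weak* continuous, since $(\omega\cdot g)(b) = \omega(g\cdot b)$ where $g\cdot b := \bar{\id\otimes g}(\hat\alpha(b))\in A$. Thus $J_{\alpha,E}\ann$ is a right $B(G)$-submodule, and \lemref{quotient coaction} yields $\hat\alpha_E$.

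There is no real obstacle; the argument is essentially bookkeeping, and the only point requiring care is matching the slice map $\bar{\id\otimes q}$ with the $B(G)$-module action via the identification $E = C^*_E(G)^*\hookrightarrow B(G)$. One could alternatively bypass dualization and verify condition~(2) of \lemref{quotient coaction} directly by composing the coaction identity $\bar{\hat\alpha\otimes\id}\circ\hat\alpha = \bar{\id\otimes\delta_G}\circ\hat\alpha$ with an appropriate slice through $q$, but the module-theoretic route above is shorter and parallels the proof of \corref{coaction}.
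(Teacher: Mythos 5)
Your proof is correct and is essentially the paper's argument in dual form: where the paper verifies condition (2) of \lemref{quotient coaction} by showing $g\cdot a\in J_{\alpha,E}$ for $a\in J_{\alpha,E}$ via $h\cdot(g\cdot a)=(hg)\cdot a=g\cdot(h\cdot a)=0$, you verify the equivalent condition (3) by showing $(\omega\cdot f)\cdot g=\omega\cdot(fg)\in A^*\cdot E$. Both arguments rest on the same two ingredients --- the description of $J_{\alpha,E}$ (equivalently of $J_{\alpha,E}\ann$ as the weak*-closed span of $A^*\cdot E$) in terms of the $B(G)$-module action, which is exactly \lemref{kill}, and the hypothesis that $E$ is an ideal of $B(G)$.
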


\begin{proof}
By \lemref{coaction}, we must show that
\[
J_{\alpha,E}\subset \ker \bar{Q\otimes\id}\circ \hat\alpha.
\]
Let $a\in J_{\alpha,E}$, $\omega\in (B\rtimes_{\alpha,E} G)^*$, and $g\in B(G)$.
Then
\begin{align*}
\bar{\omega\otimes g}\circ \bar{Q\otimes\id}\circ\hat\alpha(a)
&=\bar{Q^*\omega\otimes g}\circ\hat\alpha(a)
\\&=Q^*\omega\circ \bar{\id\otimes g}\circ\hat\alpha(a)
\\&=Q^*\omega(g\cdot a).
\end{align*}
Now, since $Q^*\omega\in J_{\alpha,E}\ann$, it suffices to show that $g\cdot a\in J_{\alpha,E}$.
For $h\in E$ we have
\[
h\cdot (g\cdot a)=(hg)\cdot a=(gh)\cdot a=g\cdot (h\cdot a)=0,
\]
because $h\cdot a=0$ by \lemref{kill} below.
\end{proof}

\begin{lem}\label{kill}
With the above notation, we have:
\begin{enumerate}
\item $J_{\alpha,E}=\{a\in B\rtimes_\alpha G:E\cdot a=\{0\}\}$, and
\item $J_{\alpha,E}\ann=\clspn\{(B\rtimes_\alpha G)^*\cdot E\}$,
where the closure is in the weak*-topology.
\end{enumerate}
\end{lem}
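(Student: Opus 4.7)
The plan is to translate the defining condition of $J_{\alpha,E}$, namely $\bar{\id\otimes q}\circ\hat\alpha(a)=0$, into a statement about the $B(G)$-module action on $B\rtimes_\alpha G$, and then deduce (2) from (1) by a bipolar argument.

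For part (1), an element $m\in M((B\rtimes_\alpha G)\otimes C^*_E(G))$ is zero if and only if $\bar{\omega\otimes\phi}(m)=0$ for every $\omega\in(B\rtimes_\alpha G)^*$ and every $\phi\in C^*_E(G)^*$. First I would use the identification of $q^*$ with the inclusion $E\hookrightarrow B(G)$ (from the remark following \defnref{E quotient}) to rewrite any such slice as $\bar{\omega\otimes h}\circ\hat\alpha(a)$ with $h\in E$. Then the defining formula $h\cdot a=\bar{\id\otimes h}\circ\hat\alpha(a)$ from the preliminaries gives
\[
\bar{\omega\otimes h}\circ\hat\alpha(a)=\omega(h\cdot a).
\]
Since $\omega$ ranges over all of $(B\rtimes_\alpha G)^*$, this vanishes for every $\omega$ precisely when $h\cdot a=0$, and since $h$ ranges over $E$, the condition $\bar{\id\otimes q}\circ\hat\alpha(a)=0$ is equivalent to $E\cdot a=\{0\}$. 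This gives (1).

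For part (2), let $F=\clspn\{(B\rtimes_\alpha G)^*\cdot E\}$ in the weak*-topology. It is a weak*-closed subspace of $(B\rtimes_\alpha G)^*$, so by the bipolar theorem $F=({}\ann F)\ann$, and it suffices to show ${}\ann F=J_{\alpha,E}$. Using the adjoint relation $(\omega\cdot h)(a)=\omega(h\cdot a)$ recorded in the preliminaries, we have
\[
{}\ann F=\{a:(\omega\cdot h)(a)=0\text{ for all }\omega\in(B\rtimes_\alpha G)^*,\ h\in E\}=\{a:\omega(h\cdot a)=0\text{ for all }\omega,h\},
\]
which is $\{a:h\cdot a=0\text{ for all }h\in E\}=J_{\alpha,E}$ by part (1). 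Then (2) follows on taking annihilators.

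The computations are routine once the bookkeeping for slice maps on multiplier algebras and for the dual pairing $E=C^*_E(G)^*\subset B(G)$ is set up correctly; the only mildly delicate point is justifying that the $\omega\otimes\phi$ slices separate points of $M((B\rtimes_\alpha G)\otimes C^*_E(G))$, which I would handle simply by appealing to the standard fact that $(B\rtimes_\alpha G)^*\otimes C^*_E(G)^*$ separates points of the minimal tensor product and that each $\omega\otimes\phi$ extends uniquely to a strictly continuous functional on the multiplier algebra.
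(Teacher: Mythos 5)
Your argument is correct and follows essentially the same route as the paper: part (1) by slicing $\bar{\id\otimes q}\circ\hat\alpha(a)$ with functionals $\omega\otimes h$, identifying $q^*h$ with an element of $E$, and invoking the module formula $\bar{\omega\otimes h}\circ\hat\alpha(a)=\omega(h\cdot a)$; part (2) by the adjoint relation $(\omega\cdot h)(a)=\omega(h\cdot a)$ together with the bipolar theorem. The paper organizes (2) as two containments rather than the single identity ${}\ann F=J_{\alpha,E}$, but the content is the same, and your appeal to the separation of points of the multiplier algebra by elementary slices matches the folklore fact the paper itself uses without proof.
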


\begin{proof}
(1)
For $a\in B\rtimes_\alpha G$, we have
\begin{align*}
&a\in J_{\alpha,E}
\\&\quad\Leftrightarrow \bar{\id\otimes q}\circ\hat\alpha(a)=0
\\&\quad\Leftrightarrow \bar{\omega\otimes h}\circ\bar{\id\otimes q}\circ\hat\alpha(a)=0
\\&\hspace{1in}\text{for all $\omega\in (B\rtimes_{\alpha,E} G)^*$ and $h\in C^*_E(G)^*$}
\\&\quad\Leftrightarrow \bar{\omega\otimes q^*h}\circ\hat\alpha(a)=0
\\&\hspace{1in}\text{for all $\omega\in (B\rtimes_{\alpha,E} G)^*$ and $h\in C^*_E(G)^*$}
\\&\quad\Leftrightarrow \bar{\omega\otimes g}\circ\hat\alpha(a)=0
\\&\hspace{1in}\text{for all $\omega\in (B\rtimes_{\alpha,E} G)^*$ and $g\in E$}
\\&\quad\Leftrightarrow \bar\omega\circ\bar{\id\otimes g}\circ\hat\alpha(a)=0
\\&\hspace{1in}\text{for all $\omega\in (B\rtimes_{\alpha,E} G)^*$ and $g\in E$}
\\&\quad\Leftrightarrow \omega(g\cdot a)=0
\quad\text{for all $\omega\in (B\rtimes_{\alpha,E} G)^*$ and $g\in E$}
\\&\quad\Leftrightarrow g\cdot a=0
\quad\text{for all $g\in E$.}
\end{align*}

(2)
If $a\in J_{\alpha,E}$, $\omega\in (B\rtimes_\alpha G)^*$, and $f\in E$,
\[
(\omega\cdot f)(a)=\omega(f\cdot a)=0,
\]
so $\omega\cdot f\in J_{\alpha,E}\ann$, and hence the left-hand side contains the right.

For the opposite containment, it suffices to show that
\[
J_{\alpha,E}\supset {}\ann\bigl((B\rtimes_\alpha G)^*\cdot E\bigr).
\]
If $a\in {}\ann((B\rtimes_\alpha G)^*\cdot E)$, then for all $\omega\in (B\rtimes_\alpha G)^*$ and $f\in E$ we have
\begin{align*}
0
&=(\omega\cdot f)(a)
=\omega(f\cdot a),
\end{align*}
so $f\cdot a=0$, and therefore $a\in J_{\alpha,E}$.
\end{proof}

\begin{rem}
We could define a covariant representation $(\pi,U)$ of the action $(B,\alpha)$ to be an \emph{$E$-representation} if the representation $U$ of $G$ is an $E$-representation,
and we could define an ideal $\tilde J_{\alpha,E}$ of $B\rtimes_\alpha G$ by
\begin{equation}\label{tilde ideal}
\tilde J_{\alpha,E}=\{a:\pi\times U(a)=0\text{ for every $E$-representation $(\pi,U)$}\},
\end{equation}
similarly to what is done in \cite[Definition~5.2]{BrownGuentner}.
It follows from \corref{E rep} that $(\pi,U)$ is an $E$-representation in the above sense if and only if
\[
\bar\omega\circ U\in E\quad\text{for all $\omega\in \bigl(\pi\times U(B\rtimes_\alpha G)\bigr)^*$,}
\]
where $i_G:C^*(G)\to M(B\rtimes_\alpha G)$ is the canonical nondegenerate homomorphism,
and consequently
\[
\tilde J_{\alpha,E}\ann=\{\omega\in (B\rtimes_\alpha G)^*:\bar\omega\circ i_G\in E\}.
\]
In the following lemma we show one
containment that always holds between \eqref{tilde ideal} and the ideal of \defnref{J_E},
after which we explain why
these ideals do \emph{not} coincide in general.
\end{rem}

\begin{lem}\label{ideals}
With the above notation, we have
\[
\tilde J_{\alpha,E}\subset J_{\alpha,E}.
\]
\end{lem}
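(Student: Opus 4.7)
The plan is to prove $\tilde J_{\alpha,E}\subset J_{\alpha,E}$ by verifying the opposite containment of annihilators, $J_{\alpha,E}\ann\subset \tilde J_{\alpha,E}\ann$, inside $(B\rtimes_\alpha G)^*$. From the remark preceding the statement,
\[
\tilde J_{\alpha,E}\ann=\{\omega\in (B\rtimes_\alpha G)^*:\bar\omega\circ i_G\in E\},
\]
which is weak*-closed (as an annihilator), while by \lemref{kill}(2) the set $J_{\alpha,E}\ann$ is the weak*-closed span of $\{\omega\cdot f:\omega\in (B\rtimes_\alpha G)^*,\ f\in E\}$. It therefore suffices to show that each such $\omega\cdot f$ lies in $\tilde J_{\alpha,E}\ann$, i.e., that $\bar{\omega\cdot f}\circ i_G\in E$.

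The key step is the identification $\bar{\omega\cdot f}\circ i_G=gf$ in $B(G)$, where $g:=\bar\omega\circ i_G$. To establish this I would unwind $\omega\cdot f=\bar{\omega\otimes f}\circ\hat\alpha$ together with the defining property $\bar{\hat\alpha}\circ i_G=\bar{i_G\otimes\id}\circ\delta_G$ of the dual coaction on the canonical image of $G$, and then invoke the formula $\bar{g\otimes f}\circ\delta_G=gf$ for multiplication in $B(G)$ recorded just above \remref{max}. Concretely, for $x\in C^*(G)$,
\[
\bar{\omega\cdot f}(i_G(x))=\bar{\omega\otimes f}\bigl(\bar{i_G\otimes\id}(\delta_G(x))\bigr)=\bar{g\otimes f}(\delta_G(x))=(gf)(x),
\]
the middle equality because $\bar{\omega\otimes f}\circ\bar{i_G\otimes\id}$ and $\bar{g\otimes f}$ agree on pure tensors in $C^*(G)\otimes C^*(G)$ and hence on all of $M(C^*(G)\otimes C^*(G))$ by strict continuity.

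Once this formula is in hand the proof closes immediately: $E$ is a two-sided ideal of $B(G)$ and $f\in E$, so $gf\in E$, which gives $\bar{\omega\cdot f}\circ i_G\in E$ and hence $\omega\cdot f\in \tilde J_{\alpha,E}\ann$. The only delicate point I expect is the multiplier-algebra bookkeeping in the central calculation — tracking the bars on the various strict extensions and confirming that $\bar{\omega\otimes f}\circ \bar{i_G\otimes\id}$ really collapses to $\bar{g\otimes f}$; everything else is formal. I would also flag that the hypothesis that $E$ is an \emph{ideal} (not merely a subalgebra) of $B(G)$ is used essentially here, since $g$ ranges over all of $B(G)$ as $\omega$ varies.
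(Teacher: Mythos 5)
Your proposal is correct and follows essentially the same route as the paper: the central computation $\bar{\omega\cdot f}\circ i_G=(\bar\omega\circ i_G)f\in E$ via the dual-coaction identity $\bar{\hat\alpha}\circ i_G=\bar{i_G\otimes\id}\circ\delta_G$ is exactly the paper's argument. You are somewhat more explicit than the paper about the final bookkeeping (invoking \lemref{kill}(2), the weak*-closedness of $\tilde J_{\alpha,E}\ann$, and passing from the annihilator inclusion back to the ideals), which the paper leaves implicit; this is a welcome clarification rather than a deviation.
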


\begin{proof}
If $\omega\in (B\rtimes_\alpha G)^*$ and $f\in E$, then
\begin{align*}
\bar{\omega\cdot f}\circ i_G
&=\bar{\omega\otimes f}\circ\bar{\hat\alpha}\circ i_G
\\&=\bar{\omega\otimes f}\circ\bar{i_G\otimes\id}\circ \delta_G
\\&=\bar{\bar\omega\circ i_G\otimes f}\circ \delta_G
\\&=\bigl(\bar\omega\circ i_G\bigr)f,
\end{align*}
which is in $E$ because $f\in E$ and $E$ is an ideal of $B(G)$.
Thus $\omega\cdot f\in \tilde J_{\alpha,E}\ann$.
\end{proof}

\begin{ex}
To see that the inclusion of \lemref{ideals} can be proper, 
consider the extreme case $E=B_r(G)$, so that $B\rtimes_{\alpha,E} G=B\rtimes_{\alpha,r} G$.
In this case $J_{\alpha,E}$ is the kernel of the regular representation $\Lambda:B\rtimes_\alpha G\to B\rtimes_{\alpha,r} G$.
On the other hand, $\tilde J_{\alpha,E}$ comprises the elements that are killed by every representation $\pi\times U$ for which $U$ is weakly contained in the regular representation $\lambda$ of $G$.
\cite[Example~5.3]{QS} gives an example of an action $(B,\alpha)$
having a covariant representation $(\pi,U)$ for which $U$ is weakly contained in $\lambda$ but $\pi\times U$ is not weakly contained in $\Lambda$.
Thus $\ker \pi\times U$ contains $\tilde J_{\alpha,E}$ and $J_{\alpha,E}$ has an element not contained in $\ker \pi\times U$, so $\tilde J_{\alpha,E}$ is properly contained in $J_{\alpha,E}$ in this case.
\end{ex}

\begin{defn}
We say that $G$ is \emph{$E$-amenable} if there are positive definite functions $h_n$ in $E$ such that $h_n\to 1$ uniformly on compact sets.
\end{defn}

\begin{lem}
If $G$ is $E$-amenable and $(A,G,\alpha)$ is an action, then $J_{\alpha,E}=\{0\}$,
so
\[
A\rtimes_\alpha G\cong A\rtimes_{\alpha,E} G.
\]
\end{lem}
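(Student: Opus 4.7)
The plan is to use \lemref{kill} to reduce the claim to showing that the left $B(G)$-action of the approximating sequence $h_n \in E$ recovers the identity on $A\rtimes_\alpha G$ in the point-norm topology. Then any $a \in J_{\alpha,E}$, being annihilated by every $h_n$, must be zero.

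First, I would recall from \lemref{injective} applied to the coaction $\hat\alpha$ that $\bar{\id\otimes 1_G}\circ\hat\alpha(a)=a$ for all $a\in A\rtimes_\alpha G$, and that the induced left $B(G)$-module action $f\cdot a = \bar{\id\otimes f}\circ\hat\alpha(a)$ satisfies $\|f\cdot a\|\le \|f\|_{B(G)}\|a\|$. For each positive definite $h_n$, $\|h_n\|_{B(G)}=h_n(e)$, and since $h_n\to 1$ uniformly on compact sets in particular $h_n(e)\to 1$, so the operators $h_n\cdot$ are uniformly bounded in norm.

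Next, I would compute the action on the dense $*$-subalgebra $(i_A\times i_G)(C_c(G,A))$ of $A\rtimes_\alpha G$. Using $\bar{\hat\alpha}(i_G(s))=i_G(s)\otimes s$ and $\bar{\hat\alpha}(i_A(b))=i_A(b)\otimes 1$, strict continuity yields
\[
h\cdot (i_A\times i_G)(f) = (i_A\times i_G)(hf) \quad\text{for } f\in C_c(G,A),
\]
where $hf$ denotes pointwise multiplication. For $f$ supported in a compact set $K$, uniform convergence $h_n\to 1$ on $K$ gives $h_nf\to f$ in $L^1(G,A)$, and since the crossed-product norm is dominated by the $L^1$-norm, $h_n\cdot (i_A\times i_G)(f)\to (i_A\times i_G)(f)$ in $A\rtimes_\alpha G$.

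Combining pointwise convergence on a dense subspace with the uniform bound $\sup_n\|h_n\cdot\|<\infty$, a standard $\epsilon/3$-argument gives $h_n\cdot a\to a$ for every $a\in A\rtimes_\alpha G$. Now if $a\in J_{\alpha,E}$, then $h_n\cdot a=0$ for all $n$ by \lemref{kill}(1), so $a=\lim_n h_n\cdot a=0$. Hence $J_{\alpha,E}=\{0\}$, and the quotient map $A\rtimes_\alpha G\to A\rtimes_{\alpha,E} G$ is an isomorphism. The only mildly technical point is justifying the formula $h\cdot (i_A\times i_G)(f) = (i_A\times i_G)(hf)$ within the multiplier-algebra framework; this is routine once one unpacks $\bar{\id\otimes h}\circ\hat\alpha$ on generators via strict continuity.
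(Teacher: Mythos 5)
Your proof is correct and follows essentially the same route as the paper's: reduce via \lemref{kill}(1) to showing $h_n\cdot a\to a$ in norm, which both arguments get from uniform boundedness of the operators $h_n\cdot$ (via $\|h_n\|_{B(G)}=h_n(e)\to 1$ for positive definite $h_n$) together with convergence on a dense subset coming from uniform convergence of $h_n$ on compact supports. The only cosmetic difference is that you verify convergence on $(i_A\times i_G)(C_c(G,A))$ while the paper uses products $fa$ with $f\in C_c(G)$; both are the same density argument.
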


\begin{proof}
By \lemref{kill}, we have $h_n\cdot a=0$ for all $a\in J_{\alpha,E}$.
Since $h_n\to 1$ uniformly on compact sets,
it follows that $h_n\cdot a\to a$ in norm.
To see this, note that since the $h_n$ are positive definite and $h_n\to 1$, the sequence $\{h_n\}$ is bounded in $B(G)$, and certainly for $f\in C_c(G)$ we have
\[
h_n\cdot \bigl(fa\bigr)=(h_n f)a\to fa
\]
in norm, because the pointwise products $h_n f$ converge to $f$ uniformly and hence in the inductive limit topology since $\supp f$ is compact.
Therefore $J_{\alpha,E}=\{0\}$.
\end{proof}

\begin{rem}\label{amenable}
In \cite[Section~5]{BrownGuentner}, Brown and Guentner study actions of a discrete group $G$ on a unital abelian $C^*$-algebra $C(X)$,
and introduce the concept of a $D$-amenable action, where $D$ is a $G$-invariant ideal of $\ell^\infty(G)$.
In particular, if $G$ is $D$-amenable then every action of $G$ is $D$-amenable.
They show that if the action is $D$-amenable then
$\tilde J_{\alpha,E}=\{0\}$,
i.e.,
\[
C^*_D(X\rtimes G)\cong C(X)\rtimes_\alpha G.
\]
Here we have used the notation of \cite{BrownGuentner}:
$C^*_D(X\rtimes G)$ denotes the quotient of the crossed product $C(X)\rtimes_\alpha G$ by the ideal $\tilde J_{\alpha,E}$ (although Brown and Guentner give a different, albeit equivalent, definition).

\begin{q}
With the above notation,
form a weak*-closed $G$-invariant ideal $E$ of $B(G)$ by taking the weak*-closure of $D\cap B(G)$.
Then is the stronger statement $J_{\alpha,E}=\{0\}$ true?
(One easily checks it for $E=B_r(G)$,
and it is trivial for $E=B(G)$.)
\end{q}

Note that the techniques of \cite{BrownGuentner} rely heavily on the fact that they are using ideals of $\ell^\infty(G)$, whereas our methods require ideals of $B(G)$.
%\wrong{Many, but we suspect not all, examples of weak*-closed $G$-invariant ideals of $B(G)$ arise as above 
%from ideals
%of $\ell^\infty(G)$.}
\end{rem}

\begin{defn}
A coaction $(A,\delta)$ \emph{\satisfy} if
\[
K_\delta=J_{\widehat\delta,E},
\]
where $K_\delta$ is the ideal from \eqref{Q} and $J_{\widehat\delta,E}$ is the ideal associated to the dual action $\widehat\delta$ in \defnref{J_E}.
\end{defn}

Thus $(A,\delta)$ \satisfy\ precisely when we have an isomorphism
$\Phi_E$ making the diagram
\[
\xymatrix{
A\rtimes_\delta G\rtimes_{\what\delta} G \ar[r]^-\Phi \ar[d]_Q
&A\otimes \KK(L^2(G))
\\
A\rtimes_\delta G\rtimes_{\what\delta,E} G \ar[ur]_{\Phi_E}^\cong
}
\]
commute,
where $Q$ is the quotient map.

\begin{conj}\label{E-coaction}
Every coaction \satisfy\ for some $E$.
\end{conj}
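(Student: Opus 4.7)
The natural strategy is to define $E$ as large as possible consistent with the desired inclusion $K_\delta \subset J_{\hat\delta,E}$. By \lemref{kill}(1), this amounts to requiring every $f \in E$ to kill $K_\delta$ under the left $B(G)$-action on $A\rtimes_\delta G\rtimes_{\hat\delta} G$ induced by the double-dual coaction $\hat{\hat\delta}$, so the candidate is
\[
E_\delta := \{f \in B(G) : f \cdot a = 0 \text{ for all } a \in K_\delta\}.
\]
Any $E$ meeting the definition must be contained in $E_\delta$, and conversely, by \lemref{kill}(1), if some $E$ works then so does $E_\delta$: the inclusion $E\subset E_\delta$ gives $J_{\hat\delta,E_\delta}\subset J_{\hat\delta,E}=K_\delta$, while $K_\delta\subset J_{\hat\delta,E_\delta}$ is built into the definition. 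Thus it suffices to establish that $(A,\delta)$ \satisfy\ for this canonical choice.

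First I would verify that $E_\delta$ is a weak*-closed $G$-invariant ideal of $B(G)$ containing $B_r(G)$. Weak*-closedness follows from the observation that for each $a$ and each $\omega\in (A\rtimes_\delta G\rtimes_{\hat\delta} G)^*$, the map $f\mapsto \omega(f\cdot a)=\bar{\omega\otimes f}(\hat{\hat\delta}(a))$ is weak*-continuous. The ideal property follows from the identity $(fg)\cdot a=g\cdot(f\cdot a)$, a consequence of the coaction identity for $\hat{\hat\delta}$ applied as in the computation in the proof of \lemref{injective}. $G$-invariance then follows from \lemref{invariant}. To get $E_\delta\supset B_r(G)$, one uses the containment $K_\delta\subset\ker\Lambda$ noted in the main text, together with the fact that $\Lambda$ is equivariant for $\hat{\hat\delta}$ and its normalization, so that $B_r(G)=C_r^*(G)^*$ annihilates elements of $\ker\Lambda$ under the induced module action. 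This makes $C^*_{E_\delta}(G)$ a bona fide \ga, and the inclusion $K_\delta\subset J_{\hat\delta,E_\delta}$ is then immediate.

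The crux is the reverse inclusion $J_{\hat\delta,E_\delta}\subset K_\delta$: for every $a$ in the double crossed product with $a\notin K_\delta$, one must produce $f\in E_\delta$ with $f\cdot a\ne 0$. Transporting the picture through the isomorphism $\Phi$ of diagram \eqref{Q}, this becomes a separation problem on $A\otimes\KK(L^2(G))$: the image of $E_\delta$ acting by the quotient module structure must distinguish nonzero elements of this quotient. My plan would be to express the transported double-dual coaction as $\id_{A^m}\otimes\tau$, where $\tau$ is a canonical coaction on $\KK(L^2(G))$ coming from the regular representation of $G$, and then to exhibit enough $f\in E_\delta$ whose action is nontrivial on the $\KK$-factor while annihilating $(\ker\psi)\otimes\KK(L^2(G))$ on the $A^m$-side; the factor of $A$ would then provide the separation.

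The main obstacle is precisely this last step. The defining constraint on $E_\delta$ is quite restrictive: the constant function $1_G$, which automatically separates points via $1_G\cdot a=a$, belongs to $E_\delta$ only when $\delta$ is maximal. When $\delta$ is strictly between maximal and normal it is not at all clear that $E_\delta$ contains enough functions to effect separation on the quotient, since membership demands vanishing against the entire ideal $K_\delta$. Matching the $B(G)$-module structure on $A\otimes\KK(L^2(G))$ against the kernel $\ker\psi\otimes\KK$ in a controlled way appears delicate, and one can read the introduction as telegraphing the outcome: Buss and Echterhoff have since produced counterexamples to \conjref{E-coaction}, so the approach above must fail in general. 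Presumably, in their examples $E_\delta$ is simply too small, and the $f$'s one needs to separate points outside $K_\delta$ fail to kill all of $K_\delta$.
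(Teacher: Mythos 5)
There is nothing to compare your proposal against: \conjref{E-coaction} is stated in the paper as a conjecture with no proof, and the introduction explicitly reports that Buss and Echterhoff subsequently produced counterexamples, so the statement is in fact false. Your write-up is accordingly not a proof and could not be completed into one; to your credit, you recognize this and say so. What you do supply is the correct reduction: the set $E_\delta=\{f\in B(G): f\cdot a=0 \text{ for all } a\in K_\delta\}$ is the largest candidate, any $E$ witnessing the conjecture for $(A,\delta)$ sits inside $E_\delta$, and by the monotonicity $E\subset E_\delta\Rightarrow J_{\hat\delta,E_\delta}\subset J_{\hat\delta,E}$ from \lemref{kill}, the conjecture holds for $(A,\delta)$ if and only if it holds with $E=E_\delta$. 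Your observation that $1_G\in E_\delta$ forces $K_\delta=0$ (via \lemref{injective}), i.e.\ forces $\delta$ maximal, correctly isolates why the reverse inclusion $J_{\hat\delta,E_\delta}\subset K_\delta$ is the genuine obstruction for coactions strictly between maximal and normal.

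Two smaller points. First, your appeal to \lemref{invariant} for the $G$-invariance of $E_\delta$ is circular as written: that lemma converts between ${}\ann E$ being an ideal, $E$ being a $C^*(G)$-subbimodule, and $G$-invariance, and you have not yet verified any of the three for $E_\delta$; you would instead need to check directly that the $B(G)$-module action commutes appropriately with translation, using the coaction identity for the double-dual coaction. Second, the claim $B_r(G)\subset E_\delta$ needs more than ``$\Lambda$ is equivariant'': you must show that for $f$ factoring through $\lambda$ the slice $\bar{\id\otimes f}\circ\what{\what\delta}$ kills $\ker\Lambda$, which amounts to knowing that the normalization of $\what{\what\delta}$ descends to the reduced double crossed product. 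Neither gap is the reason the argument fails --- the statement itself is false --- but both would need attention even in the cases where the conjecture does hold.
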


\begin{obs}\label{trivial}
If $E$ is an ideal of $B(G)$, then every
\ga\ $C^*_E(G)$ is an $E$-crossed product:

\[
C^*_E(G)=\C\rtimes_{\iota,E} G,
\]
where $\iota$ is the trivial action of $G$ on $\C$, 
because the kernel of the quotient map $C^*(G)\to C^*_E(G)$ is ${}\ann E$.
This generalizes the extreme cases
\begin{enumerate}
\item $C^*(G)=\C\rtimes_\iota G$;
\item $C^*_r(G)=\C\rtimes_{\iota,r} G$.
\end{enumerate}
\end{obs}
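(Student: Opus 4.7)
The plan is to compute the ideal $J_{\iota,E}$ directly from \defnref{J_E} and verify that, under the canonical identification $\C\rtimes_\iota G=C^*(G)$, it coincides with ${}\ann E\subset C^*(G)$. Once this is done, the asserted equality $\C\rtimes_{\iota,E}G=C^*_E(G)$ is immediate from \defnref{E quotient}.

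First I would use the standard identification $\C\rtimes_\iota G=C^*(G)$, under which the dual coaction $\hat\iota$ becomes the canonical coaction $\delta_G$ on $C^*(G)$. This is folklore for the trivial action on the scalars. Under this identification, the $B(G)$-module structures on $C^*(G)$ induced by $\hat\iota$ are simply the standard ones coming from $\delta_G$ described in the preliminaries.

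The main step is to identify $J_{\iota,E}$ with ${}\ann E$. By \lemref{kill}(1) we have
\[
J_{\iota,E}=\{a\in C^*(G):f\cdot a=0\text{ for all }f\in E\}.
\]
Using the duality relation $(\omega\cdot f)(a)=\omega(f\cdot a)$ from the preliminaries, together with the fact that for $A=C^*(G)$ and $\delta=\delta_G$ the right-module action $\omega\cdot f$ coincides with the product $\omega f$ in $B(G)$, we see that $f\cdot a=0$ for all $f\in E$ iff $(\omega f)(a)=0$ for all $\omega\in B(G)$ and $f\in E$. Since $E$ is an ideal of $B(G)$, the set $\{\omega f:\omega\in B(G),f\in E\}$ is contained in $E$, and since $B(G)$ is unital with unit $1_G$, taking $\omega=1_G$ shows this set also contains $E$. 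Hence the set equals $E$, and therefore $a\in J_{\iota,E}$ iff $a\in{}\ann E$.

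Combining these, $\C\rtimes_{\iota,E}G=(\C\rtimes_\iota G)/J_{\iota,E}=C^*(G)/{}\ann E=C^*_E(G)$, and the two extreme cases drop out on specializing to $E=B(G)$ and $E=B_r(G)$, using properties (3) and (4) listed before \defnref{J_E}. I do not foresee a real obstacle here: everything rests on the routine module-theoretic identification in the preceding paragraph, and the unitality of $B(G)$ is what makes the ideal hypothesis on $E$ line up exactly with the preannihilator of $E$.
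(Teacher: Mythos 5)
Your proposal is correct and fills in exactly the computation the paper leaves implicit: identifying $\C\rtimes_\iota G$ with $C^*(G)$ and $\hat\iota$ with $\delta_G$, then using \lemref{kill}(1) and the duality $(\omega\cdot f)(a)=\omega(f\cdot a)=(\omega f)(a)$ to get $J_{\iota,E}={}\ann E$ (the ideal hypothesis giving $\omega f\in E$ for one inclusion, $\omega=1_G$ for the other). This is the same route the paper intends with its one-line justification, so nothing further is needed.
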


\begin{conj}\label{E dual}
If $(B,\alpha)$ is an action, then the dual coaction $\hat\alpha_E$ on the $E$-crossed product $B\rtimes_{\alpha,E} G$ \satisfy.
\end{conj}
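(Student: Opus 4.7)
The plan is to identify both $K_{\hat\alpha_E}$ and $J_{\widehat{\hat\alpha_E},E}$ as ideals of $B\rtimes_{\alpha,E}G\rtimes_{\hat\alpha_E}G\rtimes_{\widehat{\hat\alpha_E}}G$ and show they coincide by transporting them via full-crossed-product duality to the same ``model''. Write $A=B\rtimes_{\alpha,E}G$ and $A^m=B\rtimes_\alpha G$, with dual coaction $\hat\alpha$.

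\textbf{Step 1 (Maximalization).} First verify that $Q\colon(A^m,\hat\alpha)\to(A,\hat\alpha_E)$ is a maximalization. Since $\hat\alpha$ is maximal (standard for dual coactions on full crossed products) and $Q$ is equivariant by \thmref{dual}, the content reduces to checking that the induced homomorphism $Q\rtimes G\colon A^m\rtimes_{\hat\alpha}G\to A\rtimes_{\hat\alpha_E}G$ is an isomorphism. This should follow from the universal characterization of $\hat\alpha_E$ as the quotient by $J_{\alpha,E}$ (\defnref{J_E}) together with full Imai--Takai duality $A^m\rtimes_{\hat\alpha}G\cong B\otimes\KK(L^2(G))$. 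Granted this, diagram~\eqref{Q} with $\psi=Q$ gives
\[
K_{\hat\alpha_E}=\Phi^{-1}\bigl(J_{\alpha,E}\otimes\KK(L^2(G))\bigr).
\]

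\textbf{Step 2 (Transporting $J_{\widehat{\hat\alpha_E},E}$).} By \lemref{kill}(1), $J_{\widehat{\hat\alpha_E},E}$ consists of the elements of $A\rtimes_{\hat\alpha_E}G\rtimes_{\widehat{\hat\alpha_E}}G$ annihilated by $E$ for the $B(G)$-module action coming from $\widehat{\widehat{\hat\alpha_E}}$. Since $\Phi$ is equivariant with respect to the natural coactions on the two sides, this $B(G)$-action transports to the $B(G)$-action on $A^m\otimes\KK(L^2(G))$ induced by a canonical coaction involving $\hat\alpha$ on the first factor and a coaction $\gamma$ on $\KK(L^2(G))$ implemented by the regular representation on the second. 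Consequently
\[
\Phi\bigl(J_{\widehat{\hat\alpha_E},E}\bigr)=\{y\in A^m\otimes\KK(L^2(G)):E\cdot y=0\}.
\]

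\textbf{Step 3 (Matching the ideals).} It remains to establish
\[
\{y\in A^m\otimes\KK(L^2(G)):E\cdot y=0\}=J_{\alpha,E}\otimes\KK(L^2(G)).
\]
For $\supseteq$, expand $f\cdot(a\otimes k)$ for $f\in E$ using the weak*-density of $B(G)\odot B(G)$ in $(C^*(G)\otimes C^*(G))^*$ to write $c\otimes d\mapsto f(cd)$ as a limit of sums $\sum g_l\otimes h_l$; the $G$-invariance and ideal property of $E$ force each $g_l$ into $E$, so that $g_l\cdot a=0$ by \lemref{kill}(1) applied to $\hat\alpha$, making $f\cdot(a\otimes k)=0$. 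For $\subseteq$, exploit simplicity of $\KK(L^2(G))$: the nonzero homomorphism $(\id\otimes q)\circ\gamma\colon\KK\to M(\KK\otimes C^*_E(G))$ is injective, and then nuclearity of $\KK$ together with exactness of the minimal tensor product identifies the kernel of $(\id\otimes\id\otimes q)\circ(\hat\alpha\otimes\gamma)$ with $J_{\alpha,E}\otimes\KK(L^2(G))$.

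The main obstacle is Step~3: one must reconcile weak*-limit decompositions of functionals on $C^*(G)\otimes C^*(G)$ with genuine ideal-membership in $E$, and it is here that the ideal property of $E$ and the simplicity/nuclearity of $\KK(L^2(G))$ are simultaneously required. The preliminary verification in Step~1 --- that $Q$ truly is the maximalization map for $\hat\alpha_E$ --- is also nontrivial, since it requires identifying $A\rtimes_{\hat\alpha_E}G$ with the appropriate quotient of $B\otimes\KK(L^2(G))$ coming from the definition of the $E$-crossed product.
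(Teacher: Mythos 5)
First, a point of order: the paper does not prove this statement. It appears only as \conjref{E dual}, and the introduction explicitly attributes its proof to Buss and Echterhoff \cite{BusEch}. So there is no internal argument to compare yours against; I can only judge your proposal on its own terms.

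Your Steps 1 and 2 are sound in outline. Since $E\supset B_r(G)$ by \lemref{smallest}, one gets $J_{\alpha,E}\subset\ker\Lambda$, which is the kernel of the normalization of $\hat\alpha$, and quotienting a coaction by an ideal inside that kernel does not change the crossed product; hence $Q\rtimes G$ is an isomorphism and $(B\rtimes_\alpha G,\hat\alpha)$ is the maximalization of $(B\rtimes_{\alpha,E}G,\hat\alpha_E)$. The equivariance of $\Phi$ for the bidual coaction is likewise standard. The genuine gap is Step 3, which is where the entire content of the conjecture sits, and neither inclusion is actually established there. For $\supseteq$, your claim that in a weak*-approximation of $(c,d)\mapsto f(cd)$ by sums $\sum_l g_l\otimes h_l$ the ``$G$-invariance and ideal property of $E$ force each $g_l$ into $E$'' is false: the $g_l$ are arbitrary elements of $B(G)$ and nothing places them in $E$. (This inclusion can be rescued by a different mechanism: slice the $\KK(L^2(G))$-leg first with $\phi\in\KK(L^2(G))^*$, which turns $f\cdot(a\otimes k)$ into $(m\cdot f)\cdot a$ for some $m\in M(C^*(G))$, and then invoke the $M(C^*(G))$-bimodule property of $E$ from \lemref{invariant} --- i.e., $G$-invariance, not the ideal property.) For $\subseteq$, the map $(\id\otimes\id\otimes q)\circ\mu$ is \emph{not} a tensor product of two homomorphisms: the transported coaction places $\hat\alpha(a)$ in legs $1,3$ and $\gamma(k)$ in legs $2,3$, so the two factors share the $C^*(G)$-leg, and nuclearity of $\KK(L^2(G))$ together with exactness of $\otimes_{\min}$ therefore does not identify the kernel with $J_{\alpha,E}\otimes\KK(L^2(G))$. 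No argument is offered for this inclusion, and it is precisely the hard part --- in essence, it is what \cite{BusEch} prove. As it stands, the proposal is a plausible reduction followed by an unproved claim rather than a proof.
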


\begin{rem}
In particular, by \obsref{trivial}, 
\conjref{E dual} would imply as a special case
that the canonical coaction $\delta_G^E$ on the group algebra $C^*_E(G)$ \satisfy.
\end{rem}

For our final result, we
only require that $E$ be 
a weak*-closed $G$-invariant subalgebra of $B(G)$ (but not necessarily an ideal).
By \propref{comultiplication}, $C^*_E(G)$ carries a comultiplication $\Delta$ that is a quotient of the canonical comultiplication $\delta_G$ on $C^*(G)$.

Techniques similar to those used in the proof of \thmref{dual},
taking $g\in E$ rather than $g\in B(G)$,
can be used to show:

\begin{prop}\label{coaction bialgebra}
Let $E$ be a weak*-closed $G$-invariant subalgebra of $B(G)$,
and let $(B,\alpha)$ be an action.
Then there is a coaction $\Delta_\alpha$ of the $C^*$-bialgebra $C^*_E(G)$ making the diagram
\[
\xymatrix{
B\rtimes_\alpha G \ar[r]^-{\hat\alpha} \ar[d]_{Q}
&M((B\rtimes_\alpha G)\otimes C^*(G)) \ar[d]^{\bar{Q\otimes q}}
\\
B\rtimes_{\alpha,E} G \ar[r]_-{\Delta_\alpha}
&M((B\rtimes_{\alpha,E} G)\otimes C^*_E(G))
}
\]
commute, where we use notation from \thmref{dual}.
\end{prop}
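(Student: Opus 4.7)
The plan is to adapt the proof of \thmref{dual} almost verbatim, with the simplification flagged in the hint: we only need to test against elements of $E$ rather than arbitrary elements of $B(G)$. The main task is to establish the kernel inclusion
\[
J_{\alpha,E} \subset \ker\bigl(\bar{Q\otimes q}\circ \hat\alpha\bigr),
\]
after which the universal property of the quotient $Q$ supplies the desired homomorphism $\Delta_\alpha$ making the diagram commute, and then routine checks produce the coaction axioms (with respect to the quotient comultiplication $\Delta$ on $C^*_E(G)$ furnished by \propref{comultiplication}).

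For the kernel inclusion, fix $a \in J_{\alpha,E}$. Since functionals of the form $\omega\otimes h$ with $\omega \in (B\rtimes_{\alpha,E} G)^*$ and $h \in C^*_E(G)^*$ separate points of $M((B\rtimes_{\alpha,E} G)\otimes C^*_E(G))$, it suffices to show
\[
\bar{\omega\otimes h}\circ \bar{Q\otimes q}\circ\hat\alpha(a) = 0
\]
for all such $\omega,h$. Identifying $C^*_E(G)^* = E$ via $q^*$ and unwinding the pairing exactly as in \thmref{dual} gives
\[
\bar{\omega\otimes h}\circ \bar{Q\otimes q}\circ\hat\alpha(a) = Q^*\omega\bigl((q^*h)\cdot a\bigr),
\]
and since $q^*h \in E$, \lemref{kill}(1) forces $(q^*h)\cdot a = 0$. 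This is precisely where the present argument is strictly easier than \thmref{dual}: there, the inner pairing ranged over all $g \in B(G)$, so one had to reabsorb $g\cdot a$ into $J_{\alpha,E}$ by multiplying on the left by $h \in E$, which required $E$ to be an ideal; here the test elements already lie in $E$, so the subalgebra hypothesis is never invoked at this step.

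The remaining tasks are to verify nondegeneracy and the coaction identity
\[
\bar{\Delta_\alpha\otimes\id}\circ \Delta_\alpha = \bar{\id\otimes \Delta}\circ \Delta_\alpha.
\]
Nondegeneracy follows by applying the (surjective) map $\bar{Q\otimes q}$ to the nondegeneracy condition for $\hat\alpha$. The coaction identity is obtained by precomposing both sides with the surjection $Q$, rewriting each side using the commuting diagram of the statement together with the intertwining relation $(q\otimes q)\circ \delta_G = \Delta\circ q$ provided by \propref{comultiplication}, and invoking the coaction identity for $\hat\alpha$; surjectivity of $Q$ then removes the precomposition. The subalgebra hypothesis on $E$ enters exactly once, implicitly, through the existence of $\Delta$ itself.

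The principal (and fairly mild) obstacle is the kernel inclusion, and the discussion above shows that the subalgebra hypothesis, rather than the ideal hypothesis used in \thmref{dual}, already suffices: the commutation step from \thmref{dual} that required $E$ to absorb arbitrary $B(G)$-factors becomes unnecessary once $g$ is restricted to $E$.
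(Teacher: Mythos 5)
Your proof is correct and is exactly the argument the paper intends: the paper's entire ``proof'' of this proposition is the one-sentence remark that the techniques of \thmref{dual} apply with $g\in E$ in place of $g\in B(G)$, which is precisely your kernel-inclusion computation via \lemref{kill}(1), followed by the routine verification of nondegeneracy and co-associativity that you sketch. (One minor aside: the absorption step in \thmref{dual} that you describe as ``requiring $E$ to be an ideal'' in fact uses only the $B(G)$-module axioms, commutativity of $B(G)$, and $h\cdot a=0$ for $h\in E$; but this does not affect your argument here.)
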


We close with a rather vague query:

\begin{q}
What are the relationships among $E$-crossed products, $E$-coactions, and coactions of the $C^*$-bialgebra $C^*_E(G)$?
\end{q}

We hope to investigate this question, together with Conjectures~\ref{E-coaction} and \ref{E dual}, in future research.

%\bibliographystyle{amsalpha}
%\bibliography{cstar}

\newcommand{\etalchar}[1]{$^{#1}$}
\providecommand{\bysame}{\leavevmode\hbox to3em{\hrulefill}\thinspace}
\providecommand{\MR}{\relax\ifhmode\unskip\space\fi MR }
% \MRhref is called by the amsart/book/proc definition of \MR.
\providecommand{\MRhref}[2]{%
  \href{http://www.ams.org/mathscinet-getitem?mr=#1}{#2}
}
\providecommand{\href}[2]{#2}

\end{document}